\newtheorem{theorem}{Theorem}[section]
\newtheorem{lemma}[theorem]{Lemma}
\newtheorem{corollary}[theorem]{Corollary}
\newtheorem{proposition}[theorem]{Proposition}
\newcommand\abs[1]{\left|#1\right|}
\newcommand\ceil[1]{\left\lceil #1\right\rceil}
\newcommand\floor[1]{\left\lfloor #1\right\rfloor}
\DeclareMathOperator{\zf}{Z}
\DeclareMathOperator{\ft}{ft}
\tikzset{
    nodeFilledwText/.style={shape=circle,inner sep=2pt,draw=black,fill=lightgray,thick},
    nodeFilledwoText/.style={shape=circle,inner sep=4pt,draw=black,fill=lightgray,thick},
    nodeEmptywText/.style={shape=circle,inner sep=2pt,draw=black,thick},
    nodeEmptywoText/.style={shape=circle,inner sep=4pt,draw=black,thick},
    lineDecorate/.style={black,very thick},
    lab/.style={font=\footnotesize},
    every node/.style={font=\footnotesize}
}
\begin{document}
\title{\bf On the minimal forts of trees}
\author{Thomas R. Cameron\footnote{Department of Mathematics, Penn State Behrend {\tt (trc5475@psu.edu)}}, Kelvin Li\footnote{Department of Mathematics, Penn State Behrend {\tt (kql5755@psu.edu)}}}
\maketitle

\begin{abstract}
In 2018, the concept of a fort in graph theory was introduced as a non-empty subset of vertices satisfying the condition that no vertex outside the set has exactly one neighbor in the set. Since then, forts have played a significant role in characterizing zero forcing sets, modeling the zero forcing number as an integer program, and generating lower bounds for the zero forcing number of Cartesian products. Recent research has focused on the number of minimal forts, defined as those for which no proper subset is a fort. Notably, it has been established that the number of minimal forts in any graph is strictly less than Sperner's bound, a famous bound due to Emanuel Sperner (1928) on the size of a collection of subsets where no subset contains another. Moreover, lower bounds on the number of minimal forts for several families of graphs were established, and it was shown that certain families have an exponential number of minimal forts. In this article, we provide a combinatorial-cut characterization of the minimal forts in trees. Using this characterization, we derive an upper bound on the cardinality of minimal forts and a lower bound on the number of minimal forts in trees. We also characterize the trees that attain this lower bound through a four-part equivalence theorem that provides a connection to other graph parameters, such as star centers, the fort number, and the zero forcing number. 
\\ \\
\textbf{Keywords:} Zero Forcing; Minimal Forts; Star Centers; Trees
\\
\textbf{AMS Subject Classification:} 05C05, 05C15, 05C30, 05C57
\end{abstract}
\thispagestyle{empty}

\section{Introduction}\label{sec:intro}
Zero forcing is a binary coloring game on a graph in which a set of filled vertices can force non-filled vertices to become filled according to a color change rule. 
In 2007, zero forcing was independently developed by Burgarth and Giovannetti in the study of controllability of quantum systems~\cite{Burgarth2007}, where it was referred to as graph infection.
In 2008, the zero forcing number was shown to be an upper bound on the maximum nullity of a real symmetric matrix associated with a graph~\cite{AIM2008}.
Since then, zero forcing and its applications have been extensively studied; see~\cite{Hogben2022} and the references therein.

The decision problem for the zero forcing number is NP-complete~\cite{Aazami2008}. 
Consequently, there has been significant interest in developing optimization models for studying this parameter.
Several integer programming models have been proposed for computing the zero forcing number and other related graph parameters such as the propagation time, throttling number, and fort number~\cite{Agra2019,Brimkov2019,Brimkov2021,Cameron2025}.
One such model is the fort cover model, which models zero forcing as a set cover problem in which the sets correspond to forts of a graph.

The forts of a graph were introduced in~\cite{Fast2018} and are closely related to failed zero forcing sets~\cite{Fetcie2015,Swanson2023}. 
In particular, the complement of a stalled failed zero forcing set is a fort; see the closure-complement method described in~\cite{Brimkov2019}. 
In the literature, forts are also referred to as zero-blocking sets; see, for example,~\cite{Beaudouin2020,Lin2025_comp,Lin2025_grid}. 
These sets capture structural obstructions to the propagation of the zero forcing process and play an important role in both theoretical and computational approaches to the zero forcing number. 
Recently, in~\cite{Lin2025_comp}, a linear time algorithm was developed for computing a minimum fort in a tree.
This result is notable since the decision problem for the maximum failed zero forcing number is NP-complete~\cite{Shitov2017}.

A fort is minimal if no proper subset is also a fort.
Thus, the minimal forts represent the minimal structural obstructions to the propagation of the zero forcing process and form the natural building blocks for the fort cover formulation of the zero forcing problem. 
In recent work, Becker et al.~\cite{Becker2025} studied the number of minimal forts in graphs.
They proved that the number of minimal forts in any graph is strictly less than Sperner's bound, a classical bound due to Emanuel Sperner (1928) on the size of a family of subsets in which no subset contains another.
They also derived explicit formulas and lower bounds for the number of minimal forts for several graph families and showed that certain families contain an exponential number of minimal forts.

More recently, Grood et al.~\cite{Grood2024} and Jacob et al.~\cite{Jacob2025} introduced the concept of star centers.
In~\cite{Jacob2025}, it was shown that every star center is fort irrelevant, meaning that no star center is contained in any minimal fort. 
For trees, it was further shown that a vertex is a star center if and only if it is fort irrelevant. 
Moreover, fort irrelevance was shown to coincide with zero forcing irrelevance.

In this article, we investigate the structure and enumeration of minimal forts in trees. 
In Section~\ref{sec:mf_tree_prop}, we prove a combinatorial-cut characterization of the minimal forts of a tree. 
This characterization is particularly useful since the conditions are generally much easier to check than the condition that no proper subset is a fort. 
We use this characterization to provide an upper bound on the cardinality of a minimal fort and a lower bound on the number of minimal forts of a tree. 

The lower bound on the number of minimal forts of a tree requires multiple steps. 
First, we establish lower bounds for several classes of trees.
In particular, we show that paths of order $n\geq 6$ (Section~\ref{subsec:mf_path_enumerate}), spiders of order $n\geq 4$ (Section~\ref{subsec:mf_spider_enumerate}), and trees of order $n\geq 6$ with no star centers (Section~\ref{subsec:mf_no_star_centers_enumerate}) all have at least $n/2$ minimal forts. 
Then, in Section~\ref{subsec:mf_with_star_centers_enumerate} we establish an inequality relation the number of minimal forts and the number of star centers. 
We use this relationship to prove that every tree of order $n\geq 1$ has at least $n/3$ minimal forts.
Finally, in Section~\ref{sec:equiv_thm}, we characterize the trees that attain this lower bound through a four-part equivalence theorem involving the number of minimal forts, the number of star centers, the fort number, and the zero forcing number.

\section{Preliminaries}\label{sec:prelim}
Throughout this article, we let $G=(V,E)$ denote a finite simple unweighted graph, where $V$ is the vertex set, $E$ is the edge set, and $uv\in E$ if and only if $u\neq v$ and there is an edge between vertices $u$ and $v$.
If the context is not clear, we use $V(G)$ and $E(G)$ to specify the vertex set and edge set of $G$, respectively. 
The \emph{order} of $G$ is denoted by $n=\abs{V}$ and the \emph{size} of $G$ is denoted by $m=\abs{E}$; when $m=0$, we refer to $G$ as the \emph{empty graph}.

Let $G=(V,E)$ and $u\in V$.
We define the \emph{neighborhood} of $u$ as $N(u) = \left\{v\in V\colon uv\in E\right\}$.
We refer to every $v\in N(u)$ as a \emph{neighbor} of $u$, and we say that $u$ and $v$ are \emph{adjacent}. 
If the context is not clear, we use $N_{G}(u)$ to denote the neighborhood of $u$ in the graph $G$.

If $u,v\in V$ satisfy $N(u)=N(v)$, then we say that $u$ and $v$ are \emph{twins}. 
The degree of $u$ is denoted by $\deg(u)=\abs{N(u)}$; when $\deg(u)=0$, we refer to $u$ as an \emph{isolated vertex}, when $\deg(u)=1$, we refer to $u$ as a \emph{pendant vertex} or a \emph{leaf}, and when $\deg(u)\geq 3$, we refer to $u$ as a \emph{junction vertex}.
We use $\ell(G)$ to denote the number of pendant vertices in a graph $G$. 

The graph $G'=(V',E')$ is a \emph{subgraph} of $G$ if $V'\subseteq V$ and $E'\subseteq E$. 
Given $W\subseteq V$, the \emph{induced subgraph}, denoted $G[W]$, is the subgraph of $G$ made up of the vertices in $W$ and the edge $uv\in E$ such that $u,v\in W$. 

Given vertices $u,v\in V$, we say that $u$ is \emph{connected to} $v$ if there exists a list of distinct vertices $(w_{0},w_{1},\ldots,w_{l})$ such that $u=w_{0}$, $v=w_{l}$, $w_{i}w_{i+1}\in E$ for all $i\in\{0,1,\ldots,l-1\}$, and $w_{i}\neq w_{j}$ for all $i\neq j$. 
Such a list of vertices is called a \emph{path}; in particular, we reference it as a $(u,v)$-path. 
The connected to relation is an equivalence relation, which partitions the vertex set $V$ into disjoint subsets: $W_{1},W_{2},\ldots,W_{k}$, where for all $i=1,\ldots,k$ and for all $u,v\in W_{i}$, $u$ is connected to $v$. 

For each $i=1,\ldots,k$, the induced subgraph $G[W_{i}]$ is called a \emph{connected component} of $G$.
The graph $G$ is \emph{connected} if it has exactly one connected component. 
Given $u\in V$, the graph $G-u$ is the induced subgraph $G[V\setminus\{u\}]$.
Also, given $uv\in E$, the graph $G-uv$ is the subgraph of $G$ obtained by deleting the edge $uv$. 
If $G-u$ has more connected components than $G$, then $u$ is a \emph{cut vertex} of $G$.
Similarly, if $G-uv$ has more connected components than $G$, then $uv$ is a \emph{cut edge} of $G$. 

Another graph operation that plays an important role in this article is the corona product. 
The \emph{corona} of $G=(V,E)$ with $G'=(V',E')$, denoted $G\circ G'$, is the graph obtained from the disjoint union of $G$ and $\abs{V}$ copies of $G'$ by joining each vertex $u\in V$ with the $u-$copy of $G'$. 

A \emph{cycle} is a list of vertices $(w_{0},w_{1},\ldots,w_{l})$ such that $w_{i}w_{i+1}\in E$ for all $i\in\{0,1,\ldots,l-1\}$, $w_{i}\neq w_{j}$ for all $i\neq j$, and $w_{0}w_{l}\in E$. 
A graph that contains no cycles is called \emph{acyclic}. 
A tree $T$ is a graph that is connected and \emph{acyclic}. 
The following theorem states well-known equivalent characterizations of a tree, we make use of these conditions throughout the article. 
\begin{theorem}[\cite{Diestel2016}]\label{thm:tree_char}
The following conditions are equivalent for a graph $G=(V,E)$. 
\begin{enumerate}[(a)]
\item $G=(V,E)$ is a tree.
\item For any $u,v\in V$, there is a unique $(u,v)$-path.
\item Every edge $uv\in E$ is a cut edge.
\item $G$ is connected and $\abs{E}=\abs{V}-1$. 
\end{enumerate}
\end{theorem}

\emph{Zero forcing} is a binary coloring game on a graph, where vertices are either filled or non-filled. 
In this article, we denote filled vertices by the color gray and non-filled vertices by the color white. 
An initial set of gray vertices can force white vertices to become gray following a color change rule. 
While there are many color change rules, see~\cite[Chapter 9]{Hogben2022}, we will use the \emph{standard rule} which states that a gray vertex $u$ can force a white vertex $v$ if $v$ is the only white neighbor of $u$.
Since the vertex set is finite, there comes a point in which no more forcings are possible.
If at this point all vertices are gray, then we say that the initial set of gray vertices is a \emph{zero forcing set} of $G$; otherwise, we refer to the initial set of gray vertices as a \emph{failed zero forcing set} of $G$.
The \emph{zero forcing number} of $G$, denoted $\zf(G)$, is the minimum cardinality of a zero forcing set of $G$.

Let $G=(V,E)$ be a graph. 
A fort of $G$ is a non-empty subset of vertices $F\subseteq V$ such that every vertex $v\in V\setminus{F}$ satisfies $\abs{N(v)\cap F}\neq 1$. 
A fort $F$ is a minimal fort of $G$ if every proper subset of $F$ is not a fort of $G$. 
We let $\mathcal{F}_{G}$ denote the set of all minimal forts of $G$. 
The fort number, denoted $\ft(G)$, is the maximum of a collection of disjoint sets from $\mathcal{F}_{G}$. 

Forts are also referenced as zero-blocking sets in the literature, see for example~\cite{Beaudouin2020,Lin2025_comp,Lin2025_grid}. 
It is clear from the definition of a fort that a set of vertices is a zero forcing set if and only if the set intersects every minimal fort of the graph. 
This observation has been used to generate bounds on the zero forcing number and model the zero forcing number as a set cover problem~\cite{Brimkov2019,Cameron2023,Fast2018}.
For reference, we state this observation in the following theorem. 
\begin{theorem}[Theorem 8 in~\cite{Brimkov2019}]\label{thm:fort_cover}
Let $G=(V,E)$ be a graph.
Then, $S\subseteq V$ is a zero forcing set of $G$ if and only if $S$ intersects every minimal fort in $\mathcal{F}_{G}$.
\end{theorem}

A star center of $G$ is defined through the star removal process; note that star centers were introduced as $B-$vertices in~\cite{Grood2024}.
Let $G_{0}=G$ and define $S_{0}$ to be the vertices that have two or more pendant neighbors in $G_{0}$.
Then, define $G_{1}$ to be the graph formed by removing all vertices in $S_{0}$ from $G_{0}$ as well as their pendant neighbors. 
Continuing for $i\geq 1$, let $S_{i}$ be the vertices that have two or more pendant neighbors in $G_{i}$.
Then, define $G_{i+1}$ to be the graph formed by removing all vertices in $S_{i}$ from $G_{i}$ as well as their pendant neighbors. 
Since the graph $G$ is finite, there exists a $t\geq 0$ such that $S_{t}$ is empty. 
We let $\mathcal{S}_{G}$ denote the \emph{star centers} of $G$, which is defined by the union 
\[
\mathcal{S}_{G} = S_{0}\cup S_{1} \cup \cdots \cup S_{t}
\]
Star centers are closely related to fort irrelevant vertices, which are defined as vertices that are not members of any minimal fort.
Similarly, zero forcing irrelevant vertices are those that are not members of any minimal zero forcing set. 
In~\cite{Jacob2025}, the authors prove the following useful results regarding star centers and irrelevant vertices. 
\begin{theorem}[Theorem 2.4 in \cite{Jacob2025}]\label{thm:irrelevant}
Let $G=(V,E)$ be a graph and let $v\in V$.
Then, $v$ is fort irrelevant if and only if $v$ is zero forcing irrelevant. 
\end{theorem}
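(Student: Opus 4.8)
The plan is to prove both implications by contraposition, relying on the set-cover duality of Theorem~\ref{thm:set-cover} (in its minimal-fort form) together with two elementary observations: every fort contains a minimal fort, and every superset of a zero forcing set is a zero forcing set, so that in particular every zero forcing set contains a minimal one.

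For the direction that zero forcing irrelevant implies fort irrelevant, I would assume $v$ lies in some minimal zero forcing set $S$ and deduce that $v\in F$ for some minimal fort $F$. Indeed, minimality of $S$ forces $S\setminus\{v\}$ to fail to be a zero forcing set, so Theorem~\ref{thm:set-cover} supplies a minimal fort $F$ with $F\cap(S\setminus\{v\})=\emptyset$; since $S$ itself is a zero forcing set we still have $F\cap S\neq\emptyset$, whence $F\cap S=\{v\}$ and in particular $v\in F$.

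For the reverse direction, I would assume $v$ lies in a minimal fort $F$ and build a minimal zero forcing set containing $v$, the candidate being $S=(V\setminus F)\cup\{v\}$. First I would check that $S$ is a zero forcing set: were it not, Theorem~\ref{thm:set-cover} would give a minimal fort $F'$ disjoint from $S$, hence $F'\subseteq F\setminus\{v\}\subsetneq F$, contradicting minimality of $F$. Next, $S\setminus\{v\}=V\setminus F$ is disjoint from $F$ and therefore not a zero forcing set. Finally I would pass to a minimal zero forcing set $S'\subseteq S$ and observe that $v\in S'$, since otherwise $S'\subseteq S\setminus\{v\}$ would make $S\setminus\{v\}$ a zero forcing set.

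I expect the only delicate point to be this last direction: it does not suffice to exhibit some zero forcing set containing $v$, because $v$ might be deletable from it; the specific choice $S=(V\setminus F)\cup\{v\}$ is engineered precisely so that no minimal zero forcing set inside $S$ can discard $v$, thanks to the minimality of $F$. Beyond carefully verifying these set-theoretic containments, I anticipate no real obstacle.
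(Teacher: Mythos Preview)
Your argument is correct. Both directions go through exactly as you describe: the first is an immediate consequence of Theorem~\ref{thm:set-cover} applied to $S$ and $S\setminus\{v\}$, and in the second your choice $S=(V\setminus F)\cup\{v\}$ is the right one, since any minimal zero forcing set contained in $S$ must retain $v$ lest it sit inside the non-forcing set $V\setminus F$.

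Note, however, that the present paper does not supply its own proof of this statement; it is quoted as Theorem~2.4 of~\cite{Jacob2025} in the preliminaries and used as a black box thereafter. So there is no in-paper argument to compare against. Your proof is a clean, self-contained verification relying only on the set-cover characterization (Theorem~\ref{thm:set-cover}) and the downward/upward closure of forts and zero forcing sets, which is presumably close in spirit to the original in~\cite{Jacob2025}.
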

\begin{theorem}[Corollary 2.5 in \cite{Jacob2025}]\label{thm:tree_star_centers}
Let $T=(V,E)$ be a tree and let $v\in V$.
Then, $v$ is fort irrelevant if and only if $v$ is a star center of $T$. 
\end{theorem}
\begin{theorem}[Corollary 2.6 in \cite{Jacob2025}]\label{thm:star_centers}
Let $G=(V,E)$ be a graph and let $v\in V$.
If $v$ is a star center of $G$, then $v$ is fort irrelevant. 
\end{theorem}

\section{Properties of Minimal Forts in Trees}\label{sec:mf_tree_prop}
In this section, we present a combinatorial-cut characterization of the minimal forts of trees. 
This characterization is used to give a bound on the size of minimal forts of trees and to provide constructions that are useful when enumerating the minimal forts of a tree.  
We begin with the following lemma which states that no vertex in a fort has more than one neighbor in the fort. 
\begin{lemma}\label{lem:fort_nbhd_in}
Let $T$ be a tree and let $F$ be a minimal fort of $T$. 
Let $u\in F$.
Then, $u$ has at most one neighbor in $F$. 
\end{lemma}
\begin{proof}
For the sake of contradiction, suppose that there exists a $u\in F$ such that 
\[
\abs{N(u)\cap F} \geq 2.
\]
Let $v_{1},\ldots,v_{k}$, where $k\geq 0$, denote the neighbors of $u$ that are not in $F$. 
Since $T$ is a tree, each edge $uv_{i}$ is a cut edge. 
For $1\leq i\leq k$, let $C_{i}$ denote the component of $T-uv_{i}$ that contains the vertex $v_{i}$. 
Define 
\[
F' = F\setminus\left(\left(F\cap V(C_{1})\right)\cup\cdots\cup \left(F\cap V(C_{k})\right)\cup\{u\}\right).
\]
We claim that $F'$ is a fort of $T$.
To this end, let $v\in V(T)\setminus{F'}$ and consider two cases $v\in F$ or $v\notin F$. 

Suppose that $v\in F$.
Then, $v=u$ or $v\in F\cap V(C_{i})$ for some $i\in\{1,\ldots,k\}$. 
If $v=u$, then 
\[
\abs{N(v)\cap F'} = \abs{N(u)\cap F} \geq 2.
\]
If $v\in F\cap V(C_{i})$, then $\abs{N_{T}(v)\cap F'}=0$.

Suppose that $v\notin F$.
If $v\in V(C_{i})$ for some $i\in\{1,\ldots,k\}$, then $\abs{N_{T}(v)\cap F'}=0$.
Otherwise,
\[
\abs{N_{T}(v)\cap F'} = \abs{N_{T}(v)\cap F}\neq 1.
\]

Since every $v\in V(T)\setminus{F'}$ satisfies $\abs{N_{T}(v)\cap F'}\neq 1$, it follows that $F'$ is a fort of $T$.
Furthermore, $F'\subsetneq F$, which contradicts the minimality of $F$.
\end{proof}

Next, we prove that no vertex outside of a fort has more than two neighbors in the fort. 
\begin{lemma}\label{lem:fort_nbhd_out}
Let $T$ be a tree and let $F$ be a minimal fort of $T$. 
Let $v\in V(T)\setminus{F}$.
Then, $v$ has zero neighbors in $F$ or exactly $2$ neighbors in $F$.
\end{lemma}
\begin{proof}
For the sake of contradiction, suppose there exists a $v\in V(T)\setminus{F}$ such that 
\[
\abs{N_{T}(v)\cap F}\geq 3.
\]
Let $u_{1},\ldots,u_{k}$, where $k\geq 3$, denote the neighbors of $v$ in $F$. 
Since $T$ is a tree, each edge $vu_{i}$ is a cut edge.
For $1\leq i\leq k$, let $C_{i}$ denote the component of $T-vu_{i}$ that contains the vertex $u_{i}$. 
Define
\[
F' = F\setminus\left(\left(F\cap V(C_{3})\right)\cup\cdots\cup\left(F\cap V(C_{k})\right)\right)
\]
We claim that $F'$ is a fort of $T$.
To this end, let $w\in V(T)\setminus{F'}$ and consider three cases $w=v$, $w\in V(C_{i})$ for some $i\in\{1,2\}$, or $w\in V(C_{i})$ for some $i\in\{3,\ldots,k\}$.

If $w=v$, then $\abs{N_{T}(w)\cap F'} = 2$.
If $w\in V(C_{i})$ for some $i\in\{1,2\}$, then 
\[
\abs{N_{T}(w)\cap F'} = \abs{N_{T}(w)\cap F} \neq 1.
\]
If $w\in V(C_{i})$ for some $i\in\{3,\ldots,k\}$, then $\abs{N_{T}(w)\cap F'}=0$.

Since every $w\in V(T)\setminus{F'}$ satisfies $\abs{N_{T}(w)\cap F'}\neq 1$, it follows that $F'$ is a fort of $T$.
Furthermore, $F'\subsetneq F$, which contradicts the minimality of $F$.
\end{proof}

The conditions in Lemma~\ref{lem:fort_nbhd_in} and Lemma~\ref{lem:fort_nbhd_out} are necessary for minimal forts of trees.
When combined with a cut condition, we obtain a combinatorial-cut characterization of the minimal forts of trees.
\begin{theorem}\label{thm:mf_comb_cut_char}
Let $T=(V,E)$ be a tree and let $F\subseteq V$ be non-empty.
Then, $F$ is a minimal fort of $T$ if and only if the following conditions are satisfied:
\begin{enumerate}[(a)]
\item For every $u\in F$, $\abs{N(u)\cap F}\leq 1$,
\item For every $v\notin F$, $\abs{N(v)\cap F}\in\{0,2\}$.
\item For every $x,y\notin F$ where $xy\in E$, $F$ lies entirely in one component of $T-xy$. 
\end{enumerate}
\end{theorem}
\begin{proof}
Suppose that $F$ is a minimal fort of $T$.
Then, conditions (a) and (b) hold by Lemma~\ref{lem:fort_nbhd_in} and Lemma~\ref{lem:fort_nbhd_out}, respectively. 
Suppose there exist vertices $x,y\notin F$ such that $xy\in E$.
Let $C_{x}$ and $C_{y}$ denote the connected components of $T-xy$ that contain $x$ and $y$, respectively. 
For the sake of contradiction, suppose that $F$ has vertices $u\in V(C_{x})$ and $v\in V(C_{y})$.
Let $W=\{w_{0},w_{1},\ldots,w_{l}\}$ denote the vertices of the unique $(u,v)-$path in $T$.
Then, Theorem 16 of~\cite{Becker2025} states that $F\cap W$ is a minimal fort of $T[W]$.
Since $x$ and $y$ lie on the $(u,b)-$path and both are outside of $F$, the induced path $T[W]$ contains two adjacent vertices outside of $F\cap W$, which is impossible for a fort on a path. 
Therefore, condition (c) must hold. 

Conversely, suppose that $F\subseteq V$ is non-empty and satisfies conditions (a)--(c).
Condition (b) clearly implies that $F$ is a fort of $T$. 
For the sake of contradiction, suppose that $F$ is not minimal. 
Then, there exists a minimal fort $F'\subsetneq F$.
Let $u\in F'$ and $w\in F\setminus{F'}$.
Also, let $W=\{w_{0},w_{1},\ldots,w_{l}\}$ denote the vertices in the unique $(u,w)-$path in $T$, where $u=w_{0}$ and $w=w_{l}$.
Without loss of generality, we can assume that $w_{1},\ldots,w_{l}\in V\setminus{F'}$; otherwise, we can just shorten the path by selecting a different $u\in F'$.
Since $F'$ is a minimal fort and $w_{1}$ is adjacent to $w_{0}\in F'$, Lemma~\ref{lem:fort_nbhd_out} implies that $w_{1}$ is adjacent to $2$ vertices in $F'$.
If $w_{1}\in F$, then this implies that there is a $w_{1}\in F$ such that $\abs{N(w_{1})\cap F}\geq 2$, which contradicts (a). 
Suppose that $w_{1}\notin F$.
If $w_{2}\notin F$, then $F$ would have vertices in both connected components of $T-w_{1}w_{2}$, which contradicts condition (c).
Therefore, $w_{2}\in F$.
However, this implies that there is a vertex $w_{1}\notin F$ such that $\abs{N(w_{1})\cap F}\geq 3$, which contradicts (b).
\end{proof}

The combinatorial-cut characterization of the minimal forts of trees is particularly useful since the conditions in Theorem~\ref{thm:mf_comb_cut_char} are generally much easier to check than the condition that no proper subset is a fort. 
Moreover, these conditions can be used to derive an upper bound on the cardinality of a minimal fort of a tree, as shown in the following theorem.
\begin{theorem}\label{thm:mf_card_bound}
Let $T$ be a tree and let $F$ be a minimal fort of $T$.
Then, 
\[
\abs{F}\leq\floor{\frac{2n+2}{3}}.
\]
\end{theorem}
\begin{proof}
Let $C_{1},\ldots,C_{k}$ denote the connected components of $T[F]$ and let 
\[
B = \left\{x\in V(T)\setminus{F}\colon N_{T}(x)\cap F\neq\emptyset\right\}
\]
denote the set of boundary vertices outside of $F$. 
Define $J$ to be the bipartite graph with vertex set
\[
V(J) = \left\{C_{1},\ldots,C_{k}\right\}\cup B
\]
and edge set
\[
E(J) = \left\{\{C_{i},x\}\colon N_{T}(x)\cap V(C_{i})\neq\emptyset\right\}.
\]
Note that cycles in $J$ correspond to cycles in $T$.
Therefore, since $T$ is a tree, $J$ is acyclic. 
Furthermore, disconnected components in $J$ correspond to boundary vertices $b,b'\in B$, where the $(b,b')-$path in $T$ only contains vertices not in $F$, which contradicts condition (c) of Theorem~\ref{thm:mf_comb_cut_char}.
Since $F$ is a minimal fort, it follows that $J$ is connected. 

Since $J$ is a tree, it follows that $\abs{E(J)} = k + \abs{B}-1$.
Furthermore, by condition (b) of Theorem~\ref{thm:mf_comb_cut_char}, every $x\in B$ has exactly two neighbors in $F$ and these neighbors must come from distinct components of $T[F]$ in order to avoid cycles.
Hence, $\abs{E(J)} = 2\abs{B}$.
Therefore, we have $k = \abs{B}+1$. 
Now, by condition (a) of Theorem~\ref{thm:mf_comb_cut_char}, each component of $T[F]$ contains at most $2$ vertices. 
Hence, 
\begin{align*}
\abs{F} \leq 2k &= 2\left(\abs{B}+1\right) \\
&\leq 2\left(\abs{V(T)\setminus{F}} + 1\right) \\
&= 2\left(n-\abs{F}+1\right),
\end{align*}
which implies that $3\abs{F} \leq 2n+2$.
\end{proof}

In Figure~\ref{fig:extremal_fort_and_matching} we display a minimal fort of the path graph $P_{8}$ and the corresponding induced subgraph.
Note that the minimal fort has maximum cardinality as described by Theorem~\ref{thm:mf_card_bound}.
Moreover, the corresponding induced subgraph is a matching as implied by Theorem~\ref{thm:mf_comb_cut_char}, where every vertex outside of the fort is a witness to $0$ or $2$ vertices in the fort and no pair of vertices outside of the fort have a cut edge between them that split the fort across multiple components. 
\begin{figure}[ht]
\centering
\begin{tikzpicture}[scale=0.65]
\begin{scope}[xshift=0em]
    \node (a1) at (0,0)   [nodeEmptywoText] {};
    \node (b1) at (1,0)   [nodeEmptywoText] {};
    \node (a2) at (3,0)   [nodeEmptywoText] {};
    \node (b2) at (4,0)   [nodeEmptywoText] {};
    \node (a3) at (6,0)   [nodeEmptywoText] {};
    \node (b3) at (7,0)   [nodeEmptywoText] {};
    \node (x)  at (2,1.2) [nodeFilledwoText] {};
    \node (y)  at (5,1.2) [nodeFilledwoText] {};
    \draw[lineDecorate] (a1)--(b1);
    \draw[lineDecorate] (a2)--(b2);
    \draw[lineDecorate] (a3)--(b3);
    \draw[lineDecorate] (x)--(a1);
    \draw[lineDecorate] (x)--(a2);
    \draw[lineDecorate] (y)--(b2);
    \draw[lineDecorate] (y)--(a3);
    \node[lab] at (3.5,-0.8) {Tree $T$ with fort $F$ vertices in white};
\end{scope}
\begin{scope}[xshift=25em]
    \node (A1) at (0,0)   [nodeEmptywoText] {};
    \node (B1) at (1.2,0) [nodeEmptywoText] {};
    \node (A2) at (3,0)   [nodeEmptywoText] {};
    \node (B2) at (4.2,0) [nodeEmptywoText] {};
    \node (A3) at (6,0)   [nodeEmptywoText] {};
    \node (B3) at (7.2,0) [nodeEmptywoText] {};
    \draw[lineDecorate] (A1)--(B1);
    \draw[lineDecorate] (A2)--(B2);
    \draw[lineDecorate] (A3)--(B3);
    \node[lab] at (4.0,-1.0) {Induced subgraph $T[F]$};
\end{scope}
\end{tikzpicture}
\caption{A minimal fort $F$ attaining the bound in Theorem~\ref{thm:mf_card_bound} (left), and the induced subgraph $T[F]$ (right), which is a matching as implied by Theorem~\ref{thm:mf_comb_cut_char}.}
\label{fig:extremal_fort_and_matching}
\end{figure}
\section{Enumerating the Minimal Forts of Trees}\label{sec:mf_tree_enumerate}
In this section, we provide lower bounds on the number of minimal forts of a tree.

\subsection{Enumerating the Minimal Forts of Paths}\label{subsec:mf_path_enumerate}
The following propositions provide lower bounds on the number of minimal forts of a path graph.
Each proposition makes use of the following recursive formula~\cite[Corollary 12]{Becker2025}:
\begin{equation}\label{eq:mf_path_recursive_formula}
\abs{\mathcal{F}_{P_{n}}} = \abs{\mathcal{F}_{P_{n-2}}} + \abs{\mathcal{F}_{P_{n-3}}},
\end{equation}
for $n\geq 4$, where $\abs{\mathcal{F}_{P_{1}}}=1$, $\abs{\mathcal{F}_{P_{2}}}=1$, and $\abs{\mathcal{F}_{P_{3}}}=1$.

\begin{proposition}\label{prop:path_lower_bound}
Let $P_{n}$ be a path graph of order $n\geq 1$.
If $n=1$, then $P_{n}$ has exactly one minimal fort. 
For $n\geq 2$, we have
\[
\abs{\mathcal{F}_{P_{n}}} \geq \floor{\frac{n}{2}},
\]
where $\floor{\cdot}$ denotes the floor function.
\end{proposition}
\begin{proof}
The lower bound holds for paths of order $n=1,2,3$, each of which has exactly one minimal fort.
Applying~\eqref{eq:mf_path_recursive_formula}, it follows that the lower bound holds for all $n\geq 1$.
\end{proof}
\begin{proposition}\label{prop:path_sharper_lower_bound}
Let $P_{n}$ be a path graph of order $n\geq 6$.
Then, we have
\[
\abs{\mathcal{F}_{P_{n}}} \geq \ceil{\frac{n}{2}},
\]
where $\ceil{\cdot}$ denotes the ceiling function.
\end{proposition}
\begin{proof}
The lower bound holds for paths of order $n=6,7,8$, which have $3,4,5$ minimal forts, respectively. 
Applying~\eqref{eq:mf_path_recursive_formula}, it follows that the lower bound holds for all $n\geq 6$. 
\end{proof}

\subsection{Enumerating the Minimal Forts of Spiders}\label{subsec:mf_spider_enumerate}
Next, we prove that the $\ceil{\frac{n}{2}}$ lower bound holds for spider graphs.
We make use of~\cite[Theorem 17]{Becker2025} which provides a formula for the number of minimal forts of a spider graph $S_{l_{1},\ldots,l_{k}}$, with junction vertex $v$, pendant vertices $u_{1},\ldots,u_{k}$, where $k\geq 3$, and for each $i\in\{1,\ldots,k\}$ the path from $v$ to $u_{i}$ has length $l_{i}\geq 1$.
In particular, the number of minimal forts of the spider graph satisfies 
\begin{equation}\label{eq:mf_spider}
\abs{\mathcal{F}_{S_{l_{1},\ldots,l_{k}}}} = \prod_{i=1}^{k}\abs{\mathcal{F}_{P_{l_{i}-1}}} + \sum_{i=1}^{k}\abs{\mathcal{F}_{P_{l_{i}-2}}}\prod_{j\neq i}\abs{\mathcal{F}_{P_{l_{j}-1}}}+\sum_{1\leq i<j\leq k}\abs{\mathcal{F}_{P_{l_{i}}}}\abs{\mathcal{F}_{P_{l_{j}}}},
\end{equation}
where $\abs{\mathcal{F}_{P_{-1}}}=1$ and $\abs{\mathcal{F}_{P_{0}}}=0$.
Note that the spider graph has $\sum_{1\leq i<j\leq k}\abs{\mathcal{F}_{P_{l_{i}}}}\abs{\mathcal{F}_{P_{l_{j}}}}$ minimal forts that do not contain $v$, $\prod_{i=1}^{k}\abs{\mathcal{F}_{P_{l_{i}-1}}}$ minimal forts that include $v$ and no neighbor of $v$, and $\sum_{i=1}^{k}\abs{\mathcal{F}_{P_{l_{i}-2}}}\prod_{j\neq i}\abs{\mathcal{F}_{P_{l_{j}-1}}}$ minimal forts that contain $v$ and exactly one neighbor of $v$.
We begin with the following lemma which counts the change in the number of minimal forts when a vertex is appended to a leg of a spider.
Since the labeling of the legs is arbitrary, we may assume that the vertex is appended to the first leg of the spider. 
\begin{lemma}\label{lem:increase_leg_length1}
Let $S_{l_{1},\ldots,l_{k}}$ be a spider graph and let $S_{l_{1}+1,\ldots,l_{k}}$ be constructed by appending a vertex to the first leg of $S_{l_{1},\ldots,l_{k}}$. 
Then, 
\[
\abs{\mathcal{F}_{S_{l_{1}+1,\ldots,l_{k}}}} - \abs{\mathcal{F}_{S_{l_{1},\ldots,l_{k}}}} = \Delta_{l_{1}-2}^{l_{1}}\prod_{i=2}^{k}\abs{\mathcal{F}_{P_{l_{i}-1}}} + \Delta_{l_{1}-1}^{l_{1}}\sum_{i=2}^{k}\abs{\mathcal{F}_{P_{l_{i}-2}}}\prod_{\substack{j\neq i \\ j\neq 1}}\abs{\mathcal{F}_{P_{l_{j}-1}}} + \Delta_{l_{1}}^{l_{1}+1}\sum_{i=2}^{k}\abs{\mathcal{F}_{P_{l_{i}}}},
\]
where $\Delta_{a}^{b}=\abs{\mathcal{F}_{P_{b}}} - \abs{\mathcal{F}_{P_{a}}}$, $\abs{\mathcal{F}_{P_{-1}}}=1$, and $\abs{\mathcal{F}_{P_{0}}}=0$.
\end{lemma}
\begin{proof}
By factoring out the $i=1$ term of each expression in \eqref{eq:mf_spider}, we have 
\begin{align*}
\abs{\mathcal{F}_{S_{l_{1},\ldots,l_{k}}}} = \abs{\mathcal{F}_{P_{l_{1}-1}}}\prod_{i=2}^{k}\abs{\mathcal{F}_{P_{l_{i}-1}}} &+ \abs{\mathcal{F}_{P_{l_{1}-2}}}\prod_{j\neq 1}\abs{\mathcal{F}_{P_{l_{j}-1}}} + \abs{\mathcal{F}_{P_{l_{1}-1}}}\sum_{i=2}^{k}\abs{\mathcal{F}_{P_{l_{i}-2}}}\prod_{j\neq i,1}\abs{\mathcal{F}_{P_{l_{j}-1}}} \\
&+ \abs{\mathcal{F}_{P_{l_{1}}}}\sum_{j=2}^{k}\abs{\mathcal{F}_{P_{l_{j}}}} + \sum_{2\leq i<j\leq k}\abs{\mathcal{F}_{P_{l_{i}}}}\abs{\mathcal{F}_{P_{l_{j}}}}. 
\end{align*}
Applying \eqref{eq:mf_spider} to $S_{l_{1}+1,\ldots,l_{k}}$ and factoring out the $i=1$ term from each expression, gives
\begin{align*}
\abs{\mathcal{F}_{S_{l_{1}+1,\ldots,l_{k}}}} = \abs{\mathcal{F}_{P_{l_{1}}}}\prod_{i=2}^{k}\abs{\mathcal{F}_{P_{l_{i}-1}}} &+ \abs{\mathcal{F}_{P_{l_{1}-1}}}\prod_{j\neq 1}\abs{\mathcal{F}_{P_{l_{j}-1}}} + \abs{\mathcal{F}_{P_{l_{1}}}}\sum_{i=2}^{k}\abs{\mathcal{F}_{P_{l_{i}-2}}}\prod_{j\neq i,1}\abs{\mathcal{F}_{P_{l_{j}-1}}} \\
&+ \abs{\mathcal{F}_{P_{l_{1}+1}}}\sum_{j=2}^{k}\abs{\mathcal{F}_{P_{l_{j}}}} + \sum_{2\leq i<j\leq k}\abs{\mathcal{F}_{P_{l_{i}}}}\abs{\mathcal{F}_{P_{l_{j}}}}. 
\end{align*}
By taking the difference and combining like terms, we have
\[
\abs{\mathcal{F}_{S_{l_{1}+1,\ldots,l_{k}}}} - \abs{\mathcal{F}_{S_{l_{1},\ldots,l_{k}}}} = \Delta_{l_{1}-2}^{l_{1}}\prod_{i=2}^{k}\abs{\mathcal{F}_{P_{l_{i}-1}}} + \Delta_{l_{1}-1}^{l_{1}}\sum_{i=2}^{k}\abs{\mathcal{F}_{P_{l_{i}-2}}}\prod_{\substack{j\neq i \\ j\neq 1}}\abs{\mathcal{F}_{P_{l_{j}-1}}} + \Delta_{l_{1}}^{l_{1}+1}\sum_{i=2}^{k}\abs{\mathcal{F}_{P_{l_{i}}}}.
\]
\end{proof}

The next lemma shows that the $\ceil{\frac{n}{2}}$ bound holds for spiders where the length of each leg is no more than five.  
\begin{lemma}\label{lem:leg_length_at_most5}
Let $S_{l_{1},\ldots,l_{k}}$ be a spider graph where $k\geq 3$ and $1\leq l_{i}\leq 5$ for all $i\in\{1,\ldots,k\}$. 
Then, 
\[
\abs{\mathcal{F}_{S_{l_{1},\ldots,l_{k}}}} \geq \ceil{\frac{n}{2}},
\]
where $n=1+\sum_{i=1}^{k}l_{i}$ denotes the order of the graph. 
\end{lemma}
\begin{proof}
Since $1\leq l_{i}\leq 5$ for all $i\in\{1,\ldots,k\}$, it follows that $k\geq \frac{n-1}{5}$.
Furthermore, by counting the number of minimal forts from~\eqref{eq:mf_spider} that do not include the junction vertex, we obtain the following lower bound:
\begin{align*}
\abs{\mathcal{F}_{S_{l_{1},\ldots,l_{k}}}} \geq \binom{k}{2} &= \frac{k(k-1)}{2} \\
&\geq \frac{(n-1)(n-6)}{50}.
\end{align*}
Note that $\frac{(n-1)(n-6)}{50}\geq\frac{n}{2}$ for all $n\geq 32$.
There are $5305$ non-isomorphic spider graphs with $k\geq 3$ legs and order $4\leq n\leq 31$, where $1\leq l_{i}\leq 5$, for all $i\in\{1,\ldots,k\}$. 
Using the software in~\cite{Cameron2025}, we have verified the $\ceil{\frac{n}{2}}$ bound for all these spider graphs.
The corresponding data is displayed in Figure~\ref{fig:spider_test}.
\end{proof}
\begin{figure}[ht]
\centering 
\includegraphics[width=0.70\textwidth]{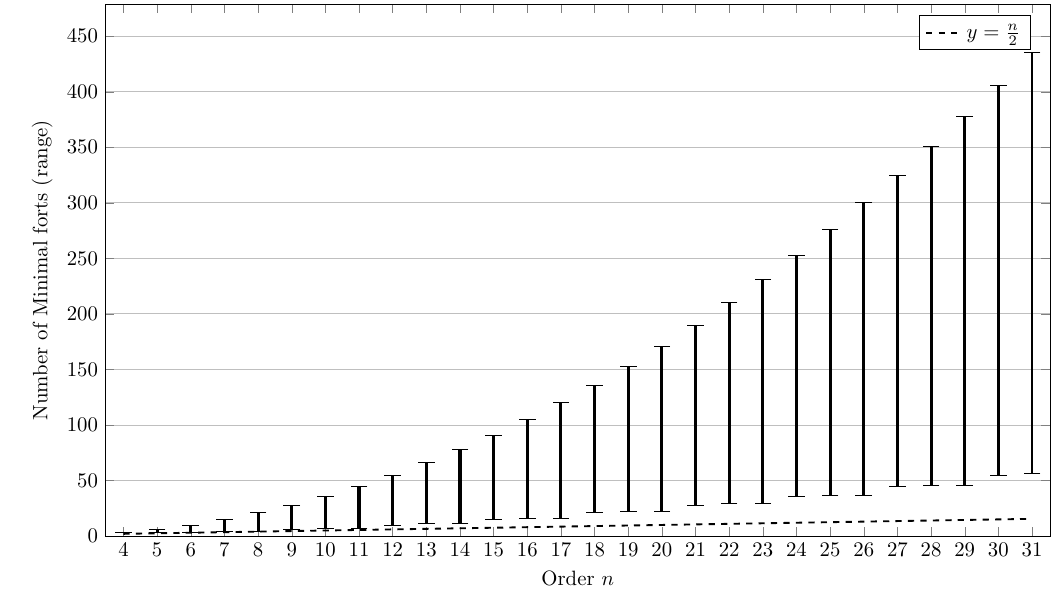}
\caption{For each order $4\leq n\leq 31$, the range of the number of minimal forts is displayed for all spider graphs with $k\geq 3$, where $1\leq l_{i}\leq 5$, for all $i\in\{1,\ldots,k\}$. Also, the bound $y=n/2$ is displayed illustrating that the bound in Lemma~\ref{lem:leg_length_at_most5} holds. The source code is available at~\myurl.}
\label{fig:spider_test}
\end{figure}
\begin{figure}[ht]
\centering
\begin{tikzpicture}[scale=0.65]
\begin{scope}[xshift=0em]
    \node (0) at (0,0)   [nodeFilledwText] {$0$};
    \node (1) at (-1.5,0)[nodeFilledwText] {$1$};
    \node (2) at (-3,0)  [nodeFilledwText] {$2$};
    \node (3) at (-4.5,0)[nodeFilledwText] {$3$};
    \node (4) at (1.5,0) [nodeFilledwText] {$4$};
    \node (5) at (3,0)   [nodeFilledwText] {$5$};
    \node (6) at (4.5,0) [nodeFilledwText] {$6$};
    \node (7) at (0,1.5) [nodeFilledwText] {$7$};
    \draw[lineDecorate] (3)--(2)--(1)--(0)--(4)--(5)--(6);
    \draw[lineDecorate] (7)--(0);
    \node[lab] at (0,-1.0) {Spider $S_{1,3,3}$};
\end{scope}
\begin{scope}[xshift=15em, yshift=5em, scale=0.65]
    \node (0a) at (0,0)    [nodeFilledwoText] {};
    \node (1a) at (-1.5,0) [nodeEmptywoText] {};
    \node (2a) at (-3,0)   [nodeFilledwoText] {};
    \node (3a) at (-4.5,0) [nodeEmptywoText] {};
    \node (4a) at (1.5,0)  [nodeFilledwoText] {};
    \node (5a) at (3,0)    [nodeFilledwoText] {};
    \node (6a) at (4.5,0)  [nodeFilledwoText] {};
    \node (7a) at (0,1.5)  [nodeEmptywoText] {};
    \draw[lineDecorate] (3a)--(2a)--(1a)--(0a)--(4a)--(5a)--(6a);
    \draw[lineDecorate] (7a)--(0a);
    \node[lab, draw=none] at (0,-1.5) {$F_1=\{1,3,7\}$};
\end{scope}
\begin{scope}[xshift=15em, yshift=-5em, scale=0.65]
    \node (0b) at (0,0)    [nodeFilledwoText] {};
    \node (1b) at (-1.5,0) [nodeFilledwoText] {};
    \node (2b) at (-3,0)   [nodeFilledwoText] {};
    \node (3b) at (-4.5,0) [nodeFilledwoText] {};
    \node (4b) at (1.5,0)  [nodeEmptywoText] {};
    \node (5b) at (3,0)    [nodeFilledwoText] {};
    \node (6b) at (4.5,0)  [nodeEmptywoText] {};
    \node (7b) at (0,1.5)  [nodeEmptywoText] {};
    \draw[lineDecorate] (3b)--(2b)--(1b)--(0b)--(4b)--(5b)--(6b);
    \draw[lineDecorate] (7b)--(0b);
    \node[lab, draw=none] at (0,-1.5) {$F_2=\{4,6,7\}$};
\end{scope}
\begin{scope}[xshift=35em, yshift=5em, scale=0.65]
    \node (0c) at (0,0)    [nodeFilledwoText] {};
    \node (1c) at (-1.5,0) [nodeEmptywoText] {};
    \node (2c) at (-3,0)   [nodeFilledwoText] {};
    \node (3c) at (-4.5,0) [nodeEmptywoText] {};
    \node (4c) at (1.5,0)  [nodeEmptywoText] {};
    \node (5c) at (3,0)    [nodeFilledwoText] {};
    \node (6c) at (4.5,0)  [nodeEmptywoText] {};
    \node (7c) at (0,1.5)  [nodeFilledwoText] {};
    \draw[lineDecorate] (3c)--(2c)--(1c)--(0c)--(4c)--(5c)--(6c);
    \draw[lineDecorate] (7c)--(0c);
    \node[lab, draw=none] at (0,-1.5) {$F_3=\{1,3,4,6\}$};
\end{scope}
\begin{scope}[xshift=35em, yshift=-5em, scale=0.65]
    \node (0d) at (0,0)    [nodeEmptywoText] {};
    \node (1d) at (-1.5,0) [nodeFilledwoText] {};
    \node (2d) at (-3,0)   [nodeEmptywoText] {};
    \node (3d) at (-4.5,0) [nodeEmptywoText] {};
    \node (4d) at (1.5,0)  [nodeFilledwoText] {};
    \node (5d) at (3,0)    [nodeEmptywoText] {};
    \node (6d) at (4.5,0)  [nodeEmptywoText] {};
    \node (7d) at (0,1.5)  [nodeEmptywoText] {};
    \draw[lineDecorate] (3d)--(2d)--(1d)--(0d)--(4d)--(5d)--(6d);
    \draw[lineDecorate] (7d)--(0d);
    \node[lab, draw=none] at (0,-1.5) {$F_4=\{0,2,3,5,6,7\}$};
\end{scope}
\end{tikzpicture}
\caption{A spider $S_{1,3,3}$ with exactly $n/2$ minimal forts (left) and the the $4$ minimal forts are displayed in white (right).}
\label{fig:spider_exact_half}
\end{figure}

It is worth noting that the bound in Lemma~\ref{lem:leg_length_at_most5} holds with equality for only two spiders: one of order $6$ and one of order $8$.
The order $8$ spider and its minimal forts are displayed in Figure~\ref{fig:spider_exact_half}. 
The following theorem shows that the $\ceil{\frac{n}{2}}$ bound holds for all spiders.
\begin{theorem}\label{thm:spider_lower_bound}
Let $S_{l_{1},\ldots,l_{k}}$ be a spider graph where $k\geq 3$ and $l_{i}\geq 1$ for all $i\in\{1,\ldots,k\}$. 
Then, 
\[
\abs{\mathcal{F}_{S_{l_{1},\ldots,l_{k}}}} \geq \ceil{\frac{n}{2}},
\]
where $n=1+\sum_{i=1}^{k}l_{i}$ denotes the order of the graph. 
\end{theorem}
\begin{proof}
We proceed via induction on the largest leg length. 
If $1\leq l_{i}\leq 5$, then the bound follows from Lemma~\ref{lem:leg_length_at_most5}. 
Let $L\geq 5$ and suppose that the bound holds for all spiders $S_{l_{1},\ldots,l_{k}}$, where $1\leq l_{i}\leq L$ for all $i\in\{1,\ldots,k\}$. 
Now, consider the spider $S_{l_{1}+1,\ldots,l_{k}}$ that is constructed by appending a vertex to the first leg of $S_{l_{1},\ldots,l_{k}}$.
If $l_{1}<L$, then the bound follows from the induction hypothesis. 
If $l_{1}\geq L\geq 5$, then Lemma~\ref{lem:increase_leg_length1} implies that 
\begin{align*}
\abs{\mathcal{F}_{S_{l_{1}+1,\ldots,l_{k}}}}  &\geq \abs{\mathcal{F}_{S_{l_{1},\ldots,l_{k}}}} + \sum_{i=2}^{k}\abs{\mathcal{F}_{P_{l_{i}}}} \\
&> \abs{\mathcal{F}_{S_{l_{1},\ldots,l_{k}}}} +  1 \\
&\geq \frac{n}{2} + 1 > \frac{n+1}{2}. 
\end{align*}
It follows that the bound holds for all spiders $S_{l_{1},\ldots,l_{k}}$, where $1\leq l_{1}\leq L+1$ and $1\leq l_{i}\leq L$ for all $i\in\{2,\ldots,k\}$. 
Moreover, for each $i\in\{2,\ldots,k\}$, when appending a vertex to leg $i$ to extend its length from $L$ to $L+1$, Lemma~\ref{lem:increase_leg_length1} implies that we gain an additional minimal fort. 
Therefore, the bound holds for all spiders $S_{l_{1},\ldots,l_{k}}$, where $1\leq l_{i}\leq L+1$ for all $i\in\{1,\ldots,k\}$. 
Hence, by the principle of mathematical induction, the bound holds for all spiders. 
\end{proof}

\subsection{Enumerating Minimal Forts with no Star Centers}\label{subsec:mf_no_star_centers_enumerate}
Next, we prove that the $\ceil{\frac{n}{2}}$ lower bound holds for all trees with no star centers. 
Recall that the star centers of a tree are exactly the fort irrelevant vertices of the tree, see Theorem~\ref{thm:tree_star_centers}.
We begin with the following proposition that guarantees the existence of a minimal fort with certain structure when there are no star centers. 
\begin{proposition}\label{prop:deg2_mf}
Let $T$ be a tree with no star centers. 
Let $v\in V(T)$ such that $\deg(v)\geq 2$.
Then, there exists a minimal fort $F$ of $T$ such that $v\notin F$ and $\abs{N(v)\cap F}=2$.
\end{proposition}
\begin{proof}
Let $x,y\in N(v)$.
Let $C_{x}$ and $C_{y}$ denote the connected components of $T-v$ that include $x$ and $y$, respectively. 
Since $T$ has no star centers, the vertices $x$ and $y$ each have at most one pendant neighbor in $T$.
Removing $v$ cannot create additional pendant neighbors for $x$ or $y$; so, neither vertex becomes a star center in $T-v$.
Therefore, there exist minimal forts $F_{x}$ and $F_{y}$ of $C_{x}$ and $C_{y}$, respectively, that include $x$ and $y$, respectively. 
Define $F=F_{x}\cup F_{y}$.
Then, $F$ is a minimal fort of $T$ since it satisfies the combinatorial-cut conditions in Theorem~\ref{thm:mf_comb_cut_char}. 
\end{proof}

The following theorem shows that the $\ceil{\frac{n}{2}}$ bound holds for all trees with no star centers. 
\begin{theorem}\label{thm:zero_star_centers}
Let $T$ be a tree of order $n\geq 6$ with no star centers. 
Then, 
\[
\abs{\mathcal{F}_{T}}\geq\ceil{\frac{n}{2}}.
\]
\end{theorem}
\begin{proof}
We proceed via induction on the number of junction vertices. 
When there are no junction vertices, $T$ is a path graph of order $n\geq 6$. 
Hence, Proposition~\ref{prop:path_sharper_lower_bound} implies that $\abs{\mathcal{F}_{T}}\geq\frac{n}{2}$.
When there is one junction vertex, $T$ is a spider graph of order $n\geq 6$.
Hence, Theorem~\ref{thm:spider_lower_bound} implies that $\abs{\mathcal{F}_{T}}\geq\frac{n}{2}$.

Let $j\geq 1$ and suppose that $\abs{\mathcal{F}_{T}}\geq\frac{n}{2}$ for all trees $T$ of order $n\geq 6$ with no star centers and $j$ or fewer junction vertices. 
Let $T$ be a tree with $(j+1)$ junction vertices. 
Let $u,v$ be junction vertices of $T$ such that the $(u,v)-$path contains no other junction vertices of $T$.
We proceed by cases on the length of the $(u,v)-$path.
In each case, we delete an edge or vertices from the $(u,v)-$path and consider the forts of the components $C_{u}$ and $C_{v}$, which contain the vertices $u$ and $v$, respectively. 
Note that the components $C_{u}$ and $C_{v}$ have at most $j$ junction vertices.

Suppose that the $(u,v)-$path has length $1$. 
Let $C_{u}$ and $C_{v}$ denote the connected components of $T-uv$ that contain $u$ and $v$, respectively. 
In what follows, we show how each fort of $C_{u}$ can be used to construct a minimal fort of $T$.
Each construction can also be applied to minimal forts of $C_{v}$.
Moreover, the constructed minimal forts of $T$ are unique.
Therefore, the induction hypothesis implies that 
\begin{align*}
\abs{\mathcal{F}_{T}} &\geq \abs{\mathcal{F}_{C_{u}}} + \abs{\mathcal{F}_{C_{v}}} \\
&\geq \frac{\abs{V(C_{u})}}{2} + \frac{\abs{V(C_{v})}}{2} = \frac{n}{2}.
\end{align*}
Let $F$ be a minimal fort of $C_{u}$ that does not contain $u$.
Then, $F$ is a minimal fort of $T$ since it satisfies the combinatorial-cut conditions in Theorem~\ref{thm:mf_comb_cut_char}. 
Let $F_{u}$ denote a minimal fort of $C_{u}$ that contains $u$.
By Proposition~\ref{prop:deg2_mf}, there is a minimal fort $F'$ of $C_{v}$ such that $v\notin F'$ and $\abs{N(v)\cap F'}=2$. 
Let $N(v)\cap F'=\{x,y\}$ and define $F'_{x}=F'\cap V(C_{x})$ and $F'_{y}=F'\cap V(C_{y})$, where $C_{x}$ and $C_{y}$ are the components of $C_{v}-v$ that contain $x$ and $y$, respectively. 
Then, $F=F_{u}\cup F'_{x}$ (or $F=F_{u}\cup F'_{y}$) is a minimal fort of $T$ since it satisfies the combinatorial-cut conditions in Theorem~\ref{thm:mf_comb_cut_char}. 

Suppose that the $(u,v)-$path has length $2$.
Let $w$ denote the neighbor of $u$ and $v$ and let $C_{u}$ and $C_{v}$ denote the components $T-w$ that contain $u$ and $v$, respectively. 
As in the previous case, every minimal fort of $C_{u}$ (or $C_{v}$) that does not contain $u$ (or $v$) is a minimal fort of $T$. 
Let $F_{u_{1}},\ldots,F_{u_{k}}$ denote the minimal forts of $C_{u}$ that contain $u$.
Since $C_{u}$ has no star centers, it follows that $k\geq 1$.
Similarly, let $F_{v_{1}},\ldots,F_{v_{l}}$ denote the minimal forts of $C_{v}$ that contain $v$.
Again, since $C_{v}$ has no star centers, it follows that $l\geq 1$. 
Note that the union $F_{u_{i}}\cup F_{v_{j}}$ is a minimal fort of $T$, for any $1\leq i\leq k$ and $1\leq j\leq l$, since it satisfies the combinatorial-cut conditions in Theorem~\ref{thm:mf_comb_cut_char}. 
Since $k,l\geq 1$, we have identified $kl\geq k+l-1$ unique minimal forts of $T$ that include $u$ and $v$, but do not include $w$. 
At this point we have accounted for all but at most one of the minimal forts from $C_{u}$ and $C_{v}$.
By Proposition~\ref{prop:deg2_mf}, there exists minimal forts $F'_{u}$ of $C_{u}$ and $F'_{v}$ of $C_{v}$ such that $u\notin F'_{u}$, $v\notin F'_{v}$, $\abs{N(u)\cap F'_{u}}=2$, and $\abs{N(v)\cap F'_{v}}=2$. 
Let $N(u)\cap F'_{u}=\{u_{x},u_{y}\}$ and define $F'_{u_{x}}=F'_{u}\cap V(C_{u_{x}})$ and $F'_{u_{y}}=F'_{u}\cap V(C_{u_{y}})$, where $C_{u_{x}}$ and $C_{u_{y}}$ are the connected components of $C_{u}-u$ that contain $u_{x}$ and $u_{y}$, respectively. 
Similarly, let $N(v)\cap F'_{v}=\{v_{x},v_{y}\}$ and define $F'_{v_{x}}=F'_{v}\cap V(C_{v_{x}})$ and $F'_{v_{y}}=F'_{v}\cap V(C_{v_{y}})$, where $C_{v_{x}}$ and $C_{v_{y}}$ are the connected components of $C_{v}-v$ that contain $v_{x}$ and $v_{y}$, respectively. 
Then, $F'_{u_{i}}\cup F'_{v_{j}}\cup\{w\}$ is a minimal fort of $T$, for any $i,j\in\{x,y\}$. 
Hence, we have identified $4$ unique minimal forts of $T$ that do not include $u$ or $v$, but do include $w$.
Therefore, we have 
\begin{align*}
\abs{\mathcal{F}_{T}} &\geq \abs{\mathcal{F}_{C_{u}}} + \abs{\mathcal{F}_{C_{v}}} + 1 \\
&\geq \frac{\abs{V(C_{u})}}{2} + \frac{\abs{V(C_{v})}}{2} + 1 \\
&= \frac{n-1}{2} + 1 = \frac{n+1}{2} \geq \frac{n}{2}.
\end{align*}

Suppose that the $(u,v)-$path has length $l\geq 3$.
Let $P$ denote the $(u,v)-$path and let $T-\left(V(P)\setminus{\{u,v\}}\right)$ denote the graph induced by deleting all vertices in the path $P$ from $T$, except for $u$ and $v$. 
Let $C_{u}$ and $C_{v}$ denote the connected components of $T-\left(V(P)\setminus{\{u,v\}}\right)$ that contain $u$ and $v$, respectively. 
As in the previous cases, every minimal fort of $C_{u}$ (or $C_{v}$) that does not contain $u$ (or $v$) is a minimal fort of $T$. 
Let $F_{u}$ denote a minimal fort of $C_{u}$ that contains $u$. 
By Proposition~\ref{prop:deg2_mf}, there is a minimal fort $F'$ of $C_{v}$ such that $v\notin F'$ and $\abs{N(v)\cap F'}=2$. 
Let $N(v)\cap F'=\{x,y\}$ and define $F'_{x}=F'\cap V(C_{x})$ and $F'_{y}=F'\cap V(C_{y})$, where $C_{x}$ and $C_{y}$ are the components of $C_{v}-v$ that contain $x$ and $y$, respectively.
Also, let the $(u,v)-$path be written as 
\[
P = u + u' + P_{l-2} + v,
\]
where the $+$ denotes path concatenation and $P_{l-2}$ denotes a path of order $l-2\geq 1$. 
Let $\hat{F}$ denote a minimal fort of $P_{l-2}$.
Then, $F= F_{u}\cup\hat{F}\cup F'_{x}$ (or $F=F_{u}\cup\hat{F}\cup F'_{y}$) is a minimal fort of $T$ since it satisfies the combinatorial-cut conditions in Theorem~\ref{thm:mf_comb_cut_char}. 
Therefore, each minimal fort of $C_{u}$ that contains $u$ corresponds to at least $2\abs{\mathcal{F}_{P_{l-2}}}$ minimal forts of $T$. 
Similarly, each minimal fort of $C_{v}$ that contains $v$ corresponds to at least $2\abs{\mathcal{F}_{P_{l-2}}}$ minimal forts of $T$. 
Note that there is no double counting since the branches $F'_{x}$ (or $F'_{y}$) are not minimal forts by themselves in their respective components. 
Furthermore, since there are no star centers, both $C_{u}$ and $C_{v}$ contain at least one minimal fort that contains $u$ and $v$, respectively. 
Therefore, we have 
\begin{align*}
\abs{\mathcal{F}_{T}} &\geq \abs{\mathcal{F}_{C_{u}}} + \abs{\mathcal{F}_{C_{v}}} + 2\abs{\mathcal{F}_{P_{l-2}}} \\
&\geq \frac{\abs{V(C_{u})}}{2} + \frac{\abs{V(C_{v})}}{2} + 2\abs{\mathcal{F}_{P_{l-2}}} \\
&= \frac{n-(l-1)}{2} + 2\abs{\mathcal{F}_{P_{l-2}}} \\
&= \frac{n-l+1+4\abs{\mathcal{F}_{P_{l-2}}}}{2} \ge \frac{n}{2}. 
\end{align*}
\end{proof}

In what follows, we illustrate the minimal fort constructions utilized in the proof of Theorem~\ref{thm:zero_star_centers}.
In Figure~\ref{fig:thm47_l1_constructions}, we show the two types of fort constructions used in the case where the distance between the junction vertices $u$ and $v$ is $l=1$. 
In Figure~\ref{fig:thm47_l2_constructions}, we show the three types of fort constructions used in the case where the distance between the junction vertices $u$ and $v$ is $l=2$.
In Figure~\ref{fig:thm47_l3_constructions}, we show the two types of fort constructions used in the case where the distance between the junction vertices $u$ and $v$ is $l\geq 3$.
It is easy to verify that all these minimal fort constructions satisfy the combinatorial-cut conditions in Theorem~\ref{thm:mf_comb_cut_char}.
\begin{figure}[ht]
\centering
\begin{tikzpicture}[scale=0.65]
\begin{scope}[xshift=0em]
    \node (u)  at (-2.6,0) [nodeFilledwText] {$u$};
    \node (v)  at ( 2.6,0) [nodeFilledwText] {$v$};
    \node (ux)  at (-4.0, 1.1) [nodeFilledwText] {$u_x$};
    \node (uxx) at (-5.1, 2.0) [nodeFilledwText] {$u_x'$};
    \node (uy)  at (-4.0,-1.1) [nodeFilledwText] {$u_y$};
    \node (uyy) at (-5.1,-2.0) [nodeFilledwText] {$u_y'$};
    \node (vx)  at (4.0, 1.1) [nodeFilledwText] {$v_x$};
    \node (vxx) at (5.1, 2.0) [nodeFilledwText] {$v_x'$};
    \node (vy)  at (4.0,-1.1) [nodeFilledwText] {$v_y$};
    \node (vyy) at (5.1,-2.0) [nodeFilledwText] {$v_y'$};
    \draw[lineDecorate] (u)--(v);
    \draw[lineDecorate] (u)--(ux)--(uxx);
    \draw[lineDecorate] (u)--(uy)--(uyy);
    \draw[lineDecorate] (v)--(vx)--(vxx);
    \draw[lineDecorate] (v)--(vy)--(vyy);
    \node[lab] at (0,-4.0) {Tree $T$ with junction vertices a distance of $1$ apart};
\end{scope}
\begin{scope}[xshift=30em, yshift=7em, scale=0.65]
    \node (u)  at (-2.6,0) [nodeFilledwoText] {};
    \node (v)  at ( 2.6,0) [nodeFilledwoText] {};
    \node (ux)  at (-4.0, 1.1) [nodeEmptywoText] {};
    \node (uxx) at (-5.1, 2.0) [nodeEmptywoText] {};
    \node (uy)  at (-4.0,-1.1) [nodeEmptywoText] {};
    \node (uyy) at (-5.1,-2.0) [nodeEmptywoText] {};
    \node (vx)  at (4.0, 1.1) [nodeFilledwoText] {};
    \node (vxx) at (5.1, 2.0) [nodeFilledwoText] {};
    \node (vy)  at (4.0,-1.1) [nodeFilledwoText] {};
    \node (vyy) at (5.1,-2.0) [nodeFilledwoText] {};
    \draw[lineDecorate] (u)--(v);
    \draw[lineDecorate] (u)--(ux)--(uxx);
    \draw[lineDecorate] (u)--(uy)--(uyy);
    \draw[lineDecorate] (v)--(vx)--(vxx);
    \draw[lineDecorate] (v)--(vy)--(vyy);
    \node[lab,align=center] at (0,-4.0)
        {Type 1: $F\in\mathcal{F}_{C_u}$ with $u\notin F$};
\end{scope}
\begin{scope}[xshift=30em, yshift=-7em, scale=0.65]
    \node (u)  at (-2.6,0) [nodeEmptywoText] {};
    \node (v)  at ( 2.6,0) [nodeFilledwoText] {};
    \node (ux)  at (-4.0, 1.1) [nodeFilledwoText] {};
    \node (uxx) at (-5.1, 2.0) [nodeEmptywoText] {};
    \node (uy)  at (-4.0,-1.1) [nodeFilledwoText] {};
    \node (uyy) at (-5.1,-2.0) [nodeEmptywoText] {};
    \node (vx)  at (4.0, 1.1) [nodeFilledwoText] {};
    \node (vxx) at (5.1, 2.0) [nodeFilledwoText] {};
    \node (vy)  at (4.0,-1.1) [nodeEmptywoText] {};
    \node (vyy) at (5.1,-2.0) [nodeEmptywoText] {};
    \draw[lineDecorate] (u)--(v);
    \draw[lineDecorate] (u)--(ux)--(uxx);
    \draw[lineDecorate] (u)--(uy)--(uyy);
    \draw[lineDecorate] (v)--(vx)--(vxx);
    \draw[lineDecorate] (v)--(vy)--(vyy);
    \node[lab,align=center] at (0,-4.0)
        {Type 2: $F_u\cup F'_y$\\
        with $u\in F_u$ and branch $F'_y$};
\end{scope}
\end{tikzpicture}
\caption{Illustration of the two constructions used in the case $l=1$ of Theorem~\ref{thm:zero_star_centers}. Fort vertices are white.}
\label{fig:thm47_l1_constructions}
\end{figure}
\begin{figure}[ht]
\centering
\begin{tikzpicture}[scale=0.70]
\begin{scope}[xshift=0em]
    \node (u)  at (-2.6,0) [nodeFilledwText] {$u$};
    \node (w)  at ( 0.0,0) [nodeFilledwText] {$w$};
    \node (v)  at ( 2.6,0) [nodeFilledwText] {$v$};
    \node (ux)  at (-4.0, 1.1) [nodeFilledwText] {$u_x$};
    \node (uxx) at (-5.1, 2.0) [nodeFilledwText] {$u_x'$};
    \node (uy)  at (-4.0,-1.1) [nodeFilledwText] {$u_y$};
    \node (uyy) at (-5.1,-2.0) [nodeFilledwText] {$u_y'$};
    \node (vx)  at (4.0, 1.1) [nodeFilledwText] {$v_x$};
    \node (vxx) at (5.1, 2.0) [nodeFilledwText] {$v_x'$};
    \node (vy)  at (4.0,-1.1) [nodeFilledwText] {$v_y$};
    \node (vyy) at (5.1,-2.0) [nodeFilledwText] {$v_y'$};
    \draw[lineDecorate] (u)--(w)--(v);
    \draw[lineDecorate] (u)--(ux)--(uxx);
    \draw[lineDecorate] (u)--(uy)--(uyy);
    \draw[lineDecorate] (v)--(vx)--(vxx);
    \draw[lineDecorate] (v)--(vy)--(vyy);
    \node[lab] at (0,-4.0) {Tree $T$ with junction vertices a distance of $2$ apart};
\end{scope}
\begin{scope}[xshift=30em, yshift=14em, scale=0.65]
    \node (u)  at (-2.6,0) [nodeFilledwoText] {};
    \node (w)  at ( 0.0,0) [nodeFilledwoText] {};
    \node (v)  at ( 2.6,0) [nodeFilledwoText] {};
    \node (ux)  at (-4.0, 1.1) [nodeEmptywoText] {};
    \node (uxx) at (-5.1, 2.0) [nodeEmptywoText] {};
    \node (uy)  at (-4.0,-1.1) [nodeEmptywoText] {};
    \node (uyy) at (-5.1,-2.0) [nodeEmptywoText] {};
    \node (vx)  at (4.0, 1.1) [nodeFilledwoText] {};
    \node (vxx) at (5.1, 2.0) [nodeFilledwoText] {};
    \node (vy)  at (4.0,-1.1) [nodeFilledwoText] {};
    \node (vyy) at (5.1,-2.0) [nodeFilledwoText] {};
    \draw[lineDecorate] (u)--(w)--(v);
    \draw[lineDecorate] (u)--(ux)--(uxx);
    \draw[lineDecorate] (u)--(uy)--(uyy);
    \draw[lineDecorate] (v)--(vx)--(vxx);
    \draw[lineDecorate] (v)--(vy)--(vyy);
    \node[lab,align=center] at (0,-4.0)
        {Type 1: $F\in\mathcal{F}_{C_u}$ with $u\notin F$};
\end{scope}
\begin{scope}[xshift=30em, yshift=0em, scale=0.65]
    \node (u)  at (-2.6,0) [nodeEmptywoText] {};
    \node (w)  at ( 0.0,0) [nodeFilledwoText] {};
    \node (v)  at ( 2.6,0) [nodeEmptywoText] {};
    \node (ux)  at (-4.0, 1.1) [nodeFilledwoText] {};
    \node (uxx) at (-5.1, 2.0) [nodeEmptywoText] {};
    \node (uy)  at (-4.0,-1.1) [nodeFilledwoText] {};
    \node (uyy) at (-5.1,-2.0) [nodeEmptywoText] {};
    \node (vx)  at (4.0, 1.1) [nodeFilledwoText] {};
    \node (vxx) at (5.1, 2.0) [nodeEmptywoText] {};
    \node (vy)  at (4.0,-1.1) [nodeFilledwoText] {};
    \node (vyy) at (5.1,-2.0) [nodeEmptywoText] {};
    \draw[lineDecorate] (u)--(w)--(v);
    \draw[lineDecorate] (u)--(ux)--(uxx);
    \draw[lineDecorate] (u)--(uy)--(uyy);
    \draw[lineDecorate] (v)--(vx)--(vxx);
    \draw[lineDecorate] (v)--(vy)--(vyy);
    \node[lab,align=center] at (0,-4.0)
        {Type 2: $F_{u_i}\cup F_{v_j}$\\
        with $u\in F_{u_i}$ and $v\in F_{v_j}$};
\end{scope}
\begin{scope}[xshift=30em, yshift=-14em, scale=0.65]
    \node (u)  at (-2.6,0) [nodeFilledwoText] {};
    \node (w)  at ( 0.0,0) [nodeEmptywoText] {};
    \node (v)  at ( 2.6,0) [nodeFilledwoText] {};
    \node (ux)  at (-4.0, 1.1) [nodeEmptywoText] {};
    \node (uxx) at (-5.1, 2.0) [nodeEmptywoText] {};
    \node (uy)  at (-4.0,-1.1) [nodeFilledwoText] {};
    \node (uyy) at (-5.1,-2.0) [nodeFilledwoText] {};
    \node (vx)  at (4.0, 1.1) [nodeEmptywoText] {};
    \node (vxx) at (5.1, 2.0) [nodeEmptywoText] {};
    \node (vy)  at (4.0,-1.1) [nodeFilledwoText] {};
    \node (vyy) at (5.1,-2.0) [nodeFilledwoText] {};
    \draw[lineDecorate] (u)--(w)--(v);
    \draw[lineDecorate] (u)--(ux)--(uxx);
    \draw[lineDecorate] (u)--(uy)--(uyy);
    \draw[lineDecorate] (v)--(vx)--(vxx);
    \draw[lineDecorate] (v)--(vy)--(vyy);
    \node[lab,align=center] at (0,-4.0)
        {Type 3: $F'_{u_i}\cup F'_{v_j}\cup\{w\}$\\
        with branches $F'_{u_i},F'_{v_j}$};
\end{scope}
\end{tikzpicture}
\caption{Illustration of the three constructions used in the case $l=2$ of Theorem~\ref{thm:zero_star_centers}. Fort vertices are white.}
\label{fig:thm47_l2_constructions}
\end{figure}
\begin{figure}[ht]
\centering
\begin{tikzpicture}[scale=0.65]
\begin{scope}[xshift=0em]
    \node (u)  at (-3.0,0) [nodeFilledwText] {$u$};
    \node (w)  at (-1.8,0) [nodeFilledwText] {$w$};
    \node (p1)  at (-0.6,0) [nodeFilledwText] {$p_1$};
    \node (p2)  at ( 0.6,0) [nodeFilledwText] {$p_2$};
    \node (p3)  at ( 1.8,0) [nodeFilledwText] {$p_3$};
    \node (v)  at ( 3.0,0) [nodeFilledwText] {$v$};
    \node (ux)  at (-4.4, 1.1) [nodeFilledwText] {$u_x$};
    \node (uxx) at (-5.5, 2.0) [nodeFilledwText] {$u_x'$};
    \node (uy)  at (-4.4,-1.1) [nodeFilledwText] {$u_y$};
    \node (uyy) at (-5.5,-2.0) [nodeFilledwText] {$u_y'$};
    \node (vx)  at (4.4, 1.1) [nodeFilledwText] {$v_x$};
    \node (vxx) at (5.5, 2.0) [nodeFilledwText] {$v_x'$};
    \node (vy)  at (4.4,-1.1) [nodeFilledwText] {$v_y$};
    \node (vyy) at (5.5,-2.0) [nodeFilledwText] {$v_y'$};
    \draw[lineDecorate] (u)--(w)--(p1)--(p2)--(p3)--(v);
    \draw[lineDecorate] (u)--(ux)--(uxx);
    \draw[lineDecorate] (u)--(uy)--(uyy);
    \draw[lineDecorate] (v)--(vx)--(vxx);
    \draw[lineDecorate] (v)--(vy)--(vyy);
    \node[lab] at (0,-4.0) {Tree $T$ with junction vertices a distance of at least $3$ apart};
\end{scope}
\begin{scope}[xshift=30em, yshift=7em, scale=0.65]
    \node (u)  at (-3.0,0) [nodeFilledwoText] {};
    \node (w)  at (-1.8,0) [nodeFilledwoText] {};
    \node (p1)  at (-0.6,0) [nodeFilledwoText] {};
    \node (p2)  at ( 0.6,0) [nodeFilledwoText] {};
    \node (p3)  at ( 1.8,0) [nodeFilledwoText] {};
    \node (v)  at ( 3.0,0) [nodeFilledwoText] {};
    \node (ux)  at (-4.4, 1.1) [nodeEmptywoText] {};
    \node (uxx) at (-5.5, 2.0) [nodeEmptywoText] {};
    \node (uy)  at (-4.4,-1.1) [nodeEmptywoText] {};
    \node (uyy) at (-5.5,-2.0) [nodeEmptywoText] {};
    \node (vx)  at (4.4, 1.1) [nodeFilledwoText] {};
    \node (vxx) at (5.5, 2.0) [nodeFilledwoText] {};
    \node (vy)  at (4.4,-1.1) [nodeFilledwoText] {};
    \node (vyy) at (5.5,-2.0) [nodeFilledwoText] {};
    \draw[lineDecorate] (u)--(w)--(p1)--(p2)--(p3)--(v);
    \draw[lineDecorate] (u)--(ux)--(uxx);
    \draw[lineDecorate] (u)--(uy)--(uyy);
    \draw[lineDecorate] (v)--(vx)--(vxx);
    \draw[lineDecorate] (v)--(vy)--(vyy);
    \node[lab,align=center] at (0,-4.0)
        {Type 1: $F\in\mathcal{F}_{C_u}$ with $u\notin F$};
\end{scope}
\begin{scope}[xshift=30em, yshift=-7em, scale=0.65]
    \node (u)  at (-3.0,0) [nodeEmptywoText] {};
    \node (w)  at (-1.8,0) [nodeFilledwoText] {};
    \node (p1)  at (-0.6,0) [nodeEmptywoText] {};
    \node (p2)  at ( 0.6,0) [nodeFilledwoText] {};
    \node (p3)  at ( 1.8,0) [nodeEmptywoText] {};
    \node (v)  at ( 3.0,0) [nodeFilledwoText] {};
    \node (ux)  at (-4.4, 1.1) [nodeFilledwoText] {};
    \node (uxx) at (-5.5, 2.0) [nodeEmptywoText] {};
    \node (uy)  at (-4.4,-1.1) [nodeFilledwoText] {};
    \node (uyy) at (-5.5,-2.0) [nodeEmptywoText] {};
    \node (vx)  at (4.4, 1.1) [nodeFilledwoText] {};
    \node (vxx) at (5.5, 2.0) [nodeFilledwoText] {};
    \node (vy)  at (4.4,-1.1) [nodeEmptywoText] {};
    \node (vyy) at (5.5,-2.0) [nodeEmptywoText] {};
    \draw[lineDecorate] (u)--(w)--(p1)--(p2)--(p3)--(v);
    \draw[lineDecorate] (u)--(ux)--(uxx);
    \draw[lineDecorate] (u)--(uy)--(uyy);
    \draw[lineDecorate] (v)--(vx)--(vxx);
    \draw[lineDecorate] (v)--(vy)--(vyy);
    \node[lab,align=center] at (0,-4.0)
        {Type 2: $F_u\cup\hat{F}\cup F'_y$ \\
        with $u\in F_u$, $\hat{F}\in\mathcal{F}(P_{\ell-2})$, and branch $F'_y$};
\end{scope}
\end{tikzpicture}
\caption{Illustration of the two constructions used in the case $l \geq 3$ of Theorem~\ref{thm:zero_star_centers}. Fort vertices are white.}
\label{fig:thm47_l3_constructions}
\end{figure}

\subsection{Enumerating Minimal Forts with Star Centers}\label{subsec:mf_with_star_centers_enumerate}
Next, we prove a useful relationship between the number of minimal forts and the number of star centers. 
Recall that the star centers of a graph are defined recursively. 
In particular, let $T_{0}=T$ and define $S_{0}$ to be the vertices of $T_{0}$ that have $2$ or more pendant neighbors. 
Then, define $T_{1}$ to be the graph formed by removing all vertices in $S_{0}$ from $T_{0}$ as well as their pendant neighbors.
Note that $T_{1}$ may be a forest since it could be disconnected. 
Continuing for $i\geq 1$, let $S_{i}$ be the vertices of $T_{i}$ that have two or more pendant neighbors.
Then, define $T_{i+1}$ to be the graph formed by removing all vertices in $S_{i}$ as well as their pendant neighbors. 
Since the graph $T$ is finite, there exists a $t\geq 0$ such that $S_{t}$ is empty.
We let $\mathcal{S}_{T}$ denote the star centers of $T$, which is defined by the union 
\[
\mathcal{S}_{T}=S_{0}\cup S_{1}\cup\cdots\cup S_{t}. 
\]
\begin{theorem}\label{thm:fort_star_bound}
Let $T$ be a tree of order $n\geq 6$. 
Then, 
\[
2\abs{\mathcal{F}_{T}} + \abs{\mathcal{S}_{T}} \geq n. 
\]
\end{theorem}
\begin{proof}
We proceed via induction on the number of star centers of $T$.
The base case, when $\abs{\mathcal{S}_{T}}=0$, is covered by Theorem~\ref{thm:zero_star_centers}. 
Let $k\geq 0$ and assume the result holds for all trees with $k$ star centers.
Let $T$ be a tree with $k+1$ star centers. 
Then, the star centers of $T$ can be written as 
\[
\mathcal{S}_{T}=S_{0}\cup S_{1}\cup\cdots\cup S_{t},
\]
where $S_{0}\neq\emptyset$ and $t\geq 0$. 
Let $s\in S_{0}$ and let 
\[
N(s)=\left\{u_{1},\ldots,u_{p},v_{1},\ldots,v_{q}\right\},
\]
where $\deg(u_{i})=1$ for $1\leq i\leq p$, $\deg(v_{j})\geq 2$ for $1\leq j\leq q$, $p\geq 2$, and $q\geq 0$. 
If $q=0$, then $T$ is the star graph and the result holds.

Hence, we assume that $q\geq 1$.
Define $T'=T-\left\{s,u_{1},\ldots,u_{p}\right\}$. 
Note that $T'$ is made up of the connected components $C_{1},\ldots,C_{q}$, where $C_{j}$ contains $v_{j}$ for $1\leq j\leq q$. 
Moreover, $T'$ has $k$ star centers and the induction hypothesis implies that the result holds for $T'$.
Let $F'$ denote a minimal fort of $C_{j}$.
If $v_{j}\notin F'$, then $F'$ is a minimal fort of $T$.
If $v_{j}\in F'$, then $F=F'\cup\{u_{i}\}$ is a minimal fort of $T$ for all $1\leq i\leq p$.
Therefore, every minimal fort of $C_{j}$ corresponds to at least one distinct minimal fort of $T$.
The minimal forts of $T'$ are constructed as the union of the minimal forts over each connected component $C_{j}$, for $1\leq j\leq q$. 
Hence, every minimal fort of $T'$ corresponds to at least one distinct minimal fort of $T$.
Furthermore, $T$ has an additional $\binom{p}{2}$ minimal forts from the pendant vertices $u_{1},\ldots,u_{p}$. 
Therefore, we have 
\begin{align*}
2\abs{\mathcal{F}_{T}} + \abs{\mathcal{S}_{T}} &\geq 2\left(\abs{\mathcal{F}_{T'}} + \binom{p}{2}\right) + \abs{\mathcal{S}_{T}} \\
&= \left(2\abs{\mathcal{F}_{T'}} + \abs{\mathcal{S}_{T'}}\right) + p(p-1) + 1 \\
&\geq (n-p-1) + p(p-1) + 1 \\
&\geq (n-p-1) + p + 1 = n. 
\end{align*}
\end{proof}

In Figure~\ref{fig:thm48_mf_constructions}, we illustrate the minimal fort constructions utilized in the proof of Theorem~\ref{thm:fort_star_bound}.
On the left, $T$ is a tree with star center $s$ that has pendant neighbors $u_1,u_2,u_3$ and non-pendant neighbors $v_1,v_2$.
The component $C_1$ at $v_1$ is a spider with three legs of length $2$, and the component $C_2$ at $v_2$ is a path $P_3$ with $v_2$ as a leaf. 
On the right, Type~1 shows a minimal fort from the spider avoiding $v_1$, while Type~2 shows a minimal fort of $P_3$ containing $v_2$ extended by a pendant neighbor of $s$.
\begin{figure}[ht]
\centering
\begin{tikzpicture}[scale=0.65]
\begin{scope}[xshift=0em]
    \node (s)  at (0.0,0.0)   [nodeFilledwText] {$s$};
    \node (u1) at (-1.5,1.5) [nodeFilledwText] {$u_1$};
    \node (u2) at (-2.121,0) [nodeFilledwText] {$u_2$};
    \node (u3) at (-1.5,-1.5) [nodeFilledwText] {$u_3$};
    \node (v1) at (1.5,1.5) [nodeFilledwText] {$v_1$};
    \node (v2) at (1.5,-1.5) [nodeFilledwText] {$v_2$};
    \node (a1) at (3.0,3.0)  [nodeFilledwText] {$a_1$};
    \node (a2) at (4.5,4.5)  [nodeFilledwText] {$a_2$};
    \node (b1) at (3.621,1.5)  [nodeFilledwText] {$b_1$};
    \node (b2) at (5.743,1.5)  [nodeFilledwText] {$b_2$};
    \node (c1) at (3.0,0.0)  [nodeFilledwText] {$c_1$};
    \node (c2) at (4.5,-1.5)  [nodeFilledwText] {$c_2$};
    \node (s')  at (3.0,-3.0)  [nodeFilledwText] {$s'$};
    \node (v1')  at (4.5,-4.5)  [nodeFilledwText] {$v'_1$};
    \draw[lineDecorate] (s)--(u1);
    \draw[lineDecorate] (s)--(u2);
    \draw[lineDecorate] (s)--(u3);
    \draw[lineDecorate] (s)--(v1);
    \draw[lineDecorate] (s)--(v2);
    \draw[lineDecorate] (v1)--(a1)--(a2);
    \draw[lineDecorate] (v1)--(b1)--(b2);
    \draw[lineDecorate] (v1)--(c1)--(c2);
    \draw[lineDecorate] (v2)--(s')--(v1');
    \node[lab,align=center] at (-1.0,-4.0)
        {Tree $T$ with star center $s$};
\end{scope}
\begin{scope}[xshift=25em, yshift=10em, scale=0.65]
    \node (s)  at (0.0,0.0)   [nodeFilledwoText] {};
    \node (u1) at (-1.5,1.5) [nodeFilledwoText] {};
    \node (u2) at (-2.121,0) [nodeFilledwoText] {};
    \node (u3) at (-1.5,-1.5) [nodeFilledwoText] {};
    \node (v1) at (1.5,1.5) [nodeFilledwoText] {};
    \node (v2) at (1.5,-1.5) [nodeFilledwoText] {};
    \node (a1) at (3.0,3.0)  [nodeEmptywoText] {};
    \node (a2) at (4.5,4.5)  [nodeEmptywoText] {};
    \node (b1) at (3.621,1.5)  [nodeEmptywoText] {};
    \node (b2) at (5.743,1.5)  [nodeEmptywoText] {};
    \node (c1) at (3.0,0.0)  [nodeFilledwoText] {};
    \node (c2) at (4.5,-1.5)  [nodeFilledwoText] {};
    \node (s')  at (3.0,-3.0)  [nodeFilledwoText] {};
    \node (v1')  at (4.5,-4.5)  [nodeFilledwoText] {};
    \draw[lineDecorate] (s)--(u1);
    \draw[lineDecorate] (s)--(u2);
    \draw[lineDecorate] (s)--(u3);
    \draw[lineDecorate] (s)--(v1);
    \draw[lineDecorate] (s)--(v2);
    \draw[lineDecorate] (v1)--(a1)--(a2);
    \draw[lineDecorate] (v1)--(b1)--(b2);
    \draw[lineDecorate] (v1)--(c1)--(c2);
    \draw[lineDecorate] (v2)--(s')--(v1');
    \node[lab,align=center] at (-1.0,-4.0)
        {Type 1: $F\in\mathcal{F}_{C_{1}}$ \\
        with $v_{1}\notin F$};
\end{scope}
\begin{scope}[xshift=25em, yshift=-10em, scale=0.65]
    \node (s)  at (0.0,0.0)   [nodeFilledwoText] {};
    \node (u1) at (-1.5,1.5) [nodeEmptywoText] {};
    \node (u2) at (-2.121,0) [nodeFilledwoText] {};
    \node (u3) at (-1.5,-1.5) [nodeFilledwoText] {};
    \node (v1) at (1.5,1.5) [nodeFilledwoText] {};
    \node (v2) at (1.5,-1.5) [nodeEmptywoText] {};
    \node (a1) at (3.0,3.0)  [nodeFilledwoText] {};
    \node (a2) at (4.5,4.5)  [nodeFilledwoText] {};
    \node (b1) at (3.621,1.5)  [nodeFilledwoText] {};
    \node (b2) at (5.743,1.5)  [nodeFilledwoText] {};
    \node (c1) at (3.0,0.0)  [nodeFilledwoText] {};
    \node (c2) at (4.5,-1.5)  [nodeFilledwoText] {};
    \node (s')  at (3.0,-3.0)  [nodeFilledwoText] {};
    \node (v1')  at (4.5,-4.5)  [nodeEmptywoText] {};
    \draw[lineDecorate] (s)--(u1);
    \draw[lineDecorate] (s)--(u2);
    \draw[lineDecorate] (s)--(u3);
    \draw[lineDecorate] (s)--(v1);
    \draw[lineDecorate] (s)--(v2);
    \draw[lineDecorate] (v1)--(a1)--(a2);
    \draw[lineDecorate] (v1)--(b1)--(b2);
    \draw[lineDecorate] (v1)--(c1)--(c2);
    \draw[lineDecorate] (v2)--(s')--(v1');
    \node[lab,align=center] at (-1.0,-4.0)
        {Type 2: $F'\cup\{u_i\}$ \\
        with $F'\in\mathcal{F}_{P_3}$ and $v_2\in F'$};
\end{scope}
\end{tikzpicture}
\caption{Illustration of the two constructions used in Theorem~\ref{thm:fort_star_bound}. Fort vertices are white.}
\label{fig:thm48_mf_constructions}
\end{figure}

The following lemma puts an upper bound on the number of star centers in any graph. 
\begin{lemma}\label{lem:star_center_bound}
Let $G$ be a graph of order $n\geq 1$ and let $\mathcal{S}_{G}$ denote the star centers of $G$. 
Then,
\[
\abs{\mathcal{S}_{G}} \leq \floor{\frac{n}{3}}.
\]
\end{lemma}
\begin{proof}
At each stage $i$, every vertex $v\in S_{i}$ has at least two pendant neighbors in $G_{i}$.
When we construct $G_{i+1}$ from $G_{i}$, we delete all vertices in $S_{i}$ from $G_{i}$ as well as their pendant neighbors. 
Note that the pendant neighbors are disjoint, that is, if $u,v\in S_{i}$ are distinct, then the pendant neighbors of $u$ and $v$ in $G_{i}$ are disjoint. 
Therefore, at stage $i$, the number of deleted vertices is at least 
\[
\abs{S_{i}} + 2\abs{S_{i}} = 3\abs{S_{i}}. 
\]
Since a vertex cannot be deleted more than once, it follows that 
\[
n \geq 3 \left(\abs{S_{0}} + \abs{S_{1}} + \cdots + \abs{S_{t}}\right) = 3\abs{\mathcal{S}_{G}}. 
\]
\end{proof}

Theorem~\ref{thm:fort_star_bound} and Lemma~\ref{lem:star_center_bound} imply a lower bound on the number of minimal forts of any tree of order at least six. 
We state this bound in the following corollary. 
\begin{corollary}\label{cor:minimal_fort_bound}
Let $T$ be a tree of order $n\geq 1$. 
Then, 
\[
\abs{\mathcal{F}_{T}}\geq \ceil{\frac{n}{3}}. 
\]
\end{corollary}
\begin{proof}
This result can be easily verified for trees of order $1\leq n\leq 5$.
For $n\geq 6$, this result follows from Theorem~\ref{thm:fort_star_bound} and Lemma~\ref{lem:star_center_bound}. 
\end{proof}

\section{Trees with the Fewest Possible Minimal Forts}\label{sec:equiv_thm}
In this section, we characterize the trees that attain the lower bound in Corollary~\ref{cor:minimal_fort_bound}. 
The following lemma shows that the lower bound is not sharp for trees with star centers that induce a disconnected graph. 
\begin{lemma}\label{lem:minimal_fort_bound+1}
Let $T$ be a tree of order $n\geq 1$.
Suppose that $\mathcal{S}_{T}\neq\emptyset$ and $T[\mathcal{S}_{T}]$ is disconnected.
Then, 
\[
\abs{\mathcal{F}_{T}} > \frac{n}{3}. 
\]
\end{lemma}
\begin{proof}
Since $\mathcal{S}_{T}$ is non-empty and $T[\mathcal{S}_{T}]$ is disconnected, it follows that there exists an $s\in S_{0}$ such that $s$ has a neighbor that is neither a pendant vertex nor a star center. 
Let
\[
N(s)=\left\{u_{1},\ldots,u_{p},v_{1},\ldots,v_{q}\right\},
\]
where $\deg(u_{i})=1$ for $1\leq i\leq p$, $\deg(v_{j})\geq 2$ for $1\leq j\leq q$, $p\geq 2$, and $q\geq 1$. 
Define $T'=T-\{s,u_{1},\ldots,u_{p}\}$. 
Note that $T'$ is made up of the connected components $C_{1},\ldots,C_{q}$, where $C_{j}$ contains $v_{j}$ for $1\leq j\leq q$. 
Since the labeling is arbitrary and $s$ is adjacent to at least one vertex that is not a pendant nor a star center, we can assume that $v_{1}$ is not a star center of $T$. 
In fact, since every star center of $T'$ is a star center of $T$, we can assume that $v_{1}$ is not a star center of $T'$.

Let $F'$ be a minimal fort of $C_{j}$, for any $j\in\{1,\ldots,q\}$. 
If $v_{j}\notin F'$, then $F'$ is a minimal fort of $T$.
If $v_{j}\in F'$, then $F=F'\cup\{u_{i}\}$ is a minimal fort for any $i\in\{1,\ldots,p\}$.
Since $v_{1}$ is not a star center of $T'$, Theorem~\ref{thm:tree_star_centers} implies that there exists at least one minimal fort of $C_{1}$ that includes $v_{1}$. 
Thus, since $p\geq 2$, $T$ has at least $\abs{\mathcal{F}_{C_{1}}}+1$ minimal forts corresponding to the minimal forts of $C_{1}$. 
Moreover, $T$ has $\binom{p}{2}$ minimal forts corresponding to the possible pairs of pendant vertices $u_{1},\ldots,u_{p}$.
Therefore, applying Corollary~\ref{cor:minimal_fort_bound} gives us
\begin{align*}
\abs{\mathcal{F}_{T}} &\geq \abs{\mathcal{F}_{C_{1}}} + 1 + \sum_{j=2}^{q}\abs{\mathcal{F}_{C_{j}}} + \binom{p}{2} \\
&\geq \sum_{j=1}^{q}\frac{\abs{V(C_{j})}}{3} + \binom{p}{2} + 1 \\
&= \frac{n-p-1}{3} + \binom{p}{2} + 1 \geq \frac{n}{3} + 1,
\end{align*}
for $p\geq 2$.
\end{proof}

The next lemma shows that the number of star centers forms a lower bound on the fort number of trees where the star centers induce a connected graph.
\begin{lemma}\label{lem:fort_number_bound}
Let $T$ be a tree and let $\mathcal{S}_{T}$ denote the star centers of $T$. 
If $T[\mathcal{S}_{T}]$ is connected, then the fort number of $T$ satisfies 
\begin{equation}\label{eq:fort_number_bound}
\ft(T) \geq \abs{\mathcal{S}_{T}}. 
\end{equation}
\end{lemma}
\begin{proof}
We first show that if $T[\mathcal{S}_{T}]$ is connected, then $S_{1}=\emptyset$, so all star centers appear in $S_{0}$. 
For the sake of contradiction, suppose that there is a vertex $v\in S_{1}$.
Note that $v$ has fewer than two pendant neighbors in $T$.
Moreover, there exists a vertex $u\in S_{0}$ whose removal from $T$ causes $v$ to have at least two pendant neighbors in $T_{1}$. 
So, there exists a $w\in V(T)$ that is not a star center of $T$ and is adjacent to both $u$ and $v$ in $T$. 
Since $T$ is a tree, $u$ and $v$ are connected by a unique path.
Since this path contains a vertex $w$ that is not a star center, it follows that $T[\mathcal{S}_{T}]$ is not connected. 

Therefore, all star centers of $T$ are in $S_{0}$. 
So, each star center has at least two pendant neighbors in $T$.
Moreover, these pendant neighbors are distinct, that is, if $u,v\in S_{0}$ are distinct, then the pendant neighbors of $u$ and $v$ in $T$ are distinct. 
Since each pair of twin pendant vertices constitutes a minimal fort of $T$, it follows that 
\[
\ft(T) \geq \abs{S_{0}} = \abs{\mathcal{S}_{T}}. 
\]
\end{proof}

The next lemma characterizes the trees $T$ where $\ft(T)=\zf(T)=n/3$. 
\begin{lemma}\label{lem:ft_zf_equal}
Let $T$ be a tree of order $n=3k$, where $k\in\mathbb{N}$. 
If $\ft(T)=\zf(T)=k$, then $T=H\circ E_{2}$, where $H$ is a tree of order $k$. 
\end{lemma}
\begin{proof}
Suppose that $\ft(T) = \zf(T) = k$. 
Then, Theorem 4.15 of~\cite{Cameron2023} implies that the number of leaves satisfy $\ell(T)=2k$. 
Since $T$ is a tree, the zero forcing number is equal to the path cover number~\cite{AIM2008}. 
Therefore, $T$ has a minimum path cover $\mathcal{P}=\{P_{1},\ldots,P_{k}\}$. 
Since $\ell(T)=2k$, each path in $\mathcal{P}$ contains exactly two leaves. 
Since no path in $\mathcal{P}$ can be disconnected from $T$, every path in $\mathcal{P}$ has order at least $3$.
Since $T$ has order $n=3k$, it follows that every path in $\mathcal{P}$ has order exactly $3$.
Therefore, for each $i \in \{1, 2, \ldots, k\}$, we write the vertex set of $P_i$ as
\[
V(P_{i}) = \{x_{i}, y_{i}, z_{i}\}
\]
and the edge set of $P_{i}$ as
\[
E(P_{i}) = \{x_{i}y_{i}, y_{i}z_{i}\}, 
\]
where the vertices $x_{i}, z_{i}$ are pendant vertices in $T$. 
Since $T$ is connected, each $y_{i}$ is adjacent to a $y_{j}$ where $i \neq j$. 
We define $H$ to be the tree with vertex set 
\[
V(H) = \{y_i \mid 1 \leq i \leq k\}
\]
and edge set
\[
E(H) = \{y_{i}y_{j} \mid y_{i}y_{j}\in E(T)\}.
\]
Then $T = H \circ E_{2}$.
\end{proof}

We are now ready to prove a four-part characterization of the trees that attain the lower bound in Corollary~\ref{cor:minimal_fort_bound}. 
\begin{theorem}\label{thm:equiv_thm}
Let $T$ be a tree of order $n=3k$, where $k\in\mathbb{N}$.
Then, the following are equivalent:
\begin{enumerate}[(a)]
\item $\abs{\mathcal{F}_{T}}=k$. 
\item $\abs{\mathcal{S}_{T}}=k$ and $T[\mathcal{S}_{T}]$ is connected. 
\item $\ft(T)=\zf(T)=k$.
\item $T=H\circ E_{2}$, where $H$ is a tree of order $k$. 
\end{enumerate}
\end{theorem}
\begin{proof}
Suppose that (a) holds, that is, $\abs{\mathcal{F}_{T}}=k$.
Then, Theorem~\ref{thm:fort_star_bound} and Lemma~\ref{lem:star_center_bound} imply that $\abs{\mathcal{S}_{T}}=k$.
Moreover, Lemma~\ref{lem:minimal_fort_bound+1} implies that $T[\mathcal{S}_{T}]$ is connected; hence, (b) holds. 

Suppose that (b) holds, that is, $\abs{\mathcal{S}_{T}}=k$ and $T[\mathcal{S}_{T}]$ is connected. 
Then, by Lemma~\ref{lem:fort_number_bound}, $\ft(T)\geq k$. 
Since $T$ is connected, every fort contains at least $2$ vertices.
In fact, since there are $k$ fort-irrelevant vertices and $n=3k$, it follows that there are exactly $k$ disjoint forts, each of which contains exactly $2$ vertices.
Furthermore, a minimum zero forcing set can be constructed by selecting exactly one vertex from each of the $k$ disjoint forts. 
Therefore, $\ft(T)=\zf(T)=k$; hence, (c) holds. 

Suppose that (c) holds, that is, $\ft(T)=\zf(T)=k$. 
Then, Lemma~\ref{lem:ft_zf_equal} implies that $T=H\circ E_{2}$, where $H$ is a tree of order $k$. 

Suppose that (d) holds, that is, $T=H\circ E_{2}$, where $H$ is a tree of order $k$. 
Then, every vertex in $H$ is a star center of $T$.
Moreover, every vertex in $H$ has two pendant neighbors that form a minimal fort of $T$.
Therefore, $T$ has at least $k$ minimal forts. 
For the sake of contradiction, suppose there exists a minimal fort $F$ of $T$ that contains pendant neighbors from $2$ or more vertices in $H$.
Then, there exists $x,y\notin F$ such that $F$ has vertices in both components of $T-xy$, which contradicts condition (c) of Theorem~\ref{thm:mf_comb_cut_char}.
Therefore, $\abs{\mathcal{F}_{T}}=k$ and (a) holds. 
\end{proof}
\section{Conclusion}\label{sec:conclusion}
In this article, we investigated the structure and enumeration of minimal forts in trees. 
Our first main result provided a combinatorial-cut characterization of the minimal forts of a tree. 
This characterization shows that the structure of a minimal fort is tightly constrained by the local adjacency relationships among its vertices.

Using this characterization, we derived bounds on the cardinality and number of minimal forts in trees.
First, we demonstrated that paths (of order at least $6$) and spiders have at least $n/2$ minimal forts. 
We also showed that the $n/2$ bound holds for all trees with no star centers. 
We then examined the relationship between minimal forts and star centers. 
In particular, we established an inequality relating the number of minimal forts in a tree to the number of star centers. 
This relationship leads to a general lower bound on the number of minimal forts in trees;  in particular, every tree of order $n$ has at least $n/3$ minimal forts. 
Finally, we characterized the trees that attain this lower bound. 
In particular, we proved a four-part equivalence theorem involving trees that attain this lower bound and their star centers, fort number, and zero forcing number. 

Several directions remain for future work. 
While the characterization obtained in this paper relies on the acyclic structure of trees, we conjecture that the $n/3$ lower bound on the number of minimal forts holds for all graphs.
It would be interesting to determine how these structural arguments and the relationship between forts and star centers extend to more general graph classes.
In addition, we are interested in studying the trees with a maximum number of minimal forts.
\section*{Acknowledgments}
The authors would like to acknowledge Dr. Joe Previte (Penn State Behrend) for first suggesting the $n/3$ lower bound on the number of minimal forts.
In addition, the authors would like to thank Dr. Boris Brimkov (Slippery Rock University) and Dr. Houston Schuerger (Tarleton State University) for many helpful conversations that supported this work.
\bibliographystyle{siam}
\bibliography{Bibliography}
\end{document}